\newdimen\plusheight
\def\+{\;\lower\plusheight\hbox{$+$}\;}
\newdimen\minusheight
\def\-{\;\lower\minusheight\hbox{$-$}\;}
\newdimen\cdotsheight
\def\cds{\lower\cdotsheight\hbox{$\cdots$}}
\numberwithin{equation}{section}
\theoremstyle{plain}
\newtheorem{theorem}{Theorem}[section]
\newtheorem{lemma}[theorem]{Lemma}
\newtheorem{conjecture}[theorem]{Conjecture}
\newtheorem{corollary}[theorem]{Corollary}
\newtheorem{proposition}[theorem]{Proposition}
\newtheorem{remark}[theorem]{Remark}
\newtheorem{question}[theorem]{Question}
\newtheorem{definition}[theorem]{Definition}
\newtheorem{example}[theorem]{Example}
\newtheorem{observation}[theorem]{Observation}
\def\@bignumber#1#2{%
  \ifx#2\end
    #1\let\next\@gobble
  \else
    #1\hspace{0pt plus 1pt}\let\next\@bignumber
  \fi
  \next#2}
\newcommand{\bignumber}[1]{\@bignumber#1\end}
\begin{document}
\allowdisplaybreaks
\title[Further aspects of $\mathcal{I}^{\mathcal{K}}$-convergence in Topological Spaces] {Further aspects of $\mathcal{I}^{\mathcal{K}}$-convergence in\\ Topological Spaces}

\author{Ankur Sharmah}

\address{Department of Mathematical Sciences, Tezpur University, Napam 784028, Assam, India}
\email{ankurs@tezu.ernet.in}

\author{Debajit Hazarika}
\address{Department of Mathematical Sciences, Tezpur University, Napam 784028, Assam, India}
\email{debajit@tezu.ernet.in}

\subjclass[2010]{Primary: 54A20; Secondary: 40A05; 40A35.}

\keywords{$\mathcal{I}^{\mathcal{K}}$-convergence, $\mathcal{I}^{\mathcal{K}}$-sequential space, $\mathcal{I}^{\mathcal{K}}$-cluster points and $\mathcal{I}^{\mathcal{K}}$-limit points}

\maketitle
\begin{abstract}
		In this paper, we obtain some results on the relationships between different ideal \linebreak convergence modes namely, $\mathcal{I}^\mathcal{K}$, $\mathcal{I}^{\mathcal{K}^*}$, $\mathcal{I}$, $\mathcal{K}$, $\mathcal{I} \cup \mathcal{K}$ and $(\mathcal{I} \cup \mathcal{K})^*$. We introduce a topological space namely $\mathcal{I}^\mathcal{K}$-sequential space and show that the class of  $\mathcal{I}^\mathcal{K}$-sequential spaces contain the sequential spaces. Further $\mathcal{I}^\mathcal{K}$-notions of cluster points and limit points of a function are also introduced here. For a given sequence in a topological space $X$, we characterize the set of $\mathcal{I}^\mathcal{K}$-cluster points of the sequence as closed subsets of $X$.
\end{abstract}
\section{ Introduction}\label{intro}

For basic general topological terminologies and results we refer to \cite{HNV04}.
The ideal convergence of a sequence of real numbers was introduced by Kostyrko et al. \cite{KSW01}, as a natural generalization of existing convergence notions such as usual convergence \cite{HNV04}, statistical convergence \cite{F51}. It was further introduced in arbitrary topological spaces accordingly for sequences \cite{LD05} and nets \cite{LD07} by Das et al.  The main goal of this article is to study $\mathcal{I}^{\mathcal{K}}$-convergence which arose as a generalization of a type of ideal convergence. In this continuation we begin with a prior mentioning of ideals and ideal convergence in topological spaces.

\vspace{1mm}
An ideal $\mathcal{I}$ on a arbitrary set $\mathcal{S}$ is a family $\mathcal{I} \subset 2^{S}$ (the power set of S) that is closed under finite unions and taking subsets. $Fin$ and $\mathcal{I}_0$ are two basic ideals on $\omega$, the set of all natural numbers, defined as $Fin$:= collection of all finite subsets of $\omega$ and $\mathcal{I}_0$:= subsets of $\omega$ with density $0$, we say $A (\subset \omega )\in \mathcal{I}_0$ if and only if $ \lim	sup_{n \rightarrow \infty}  \frac{|A \cap \{1, 2, ..., n\}|}{n}= 0$. For an ideal $\mathcal{I}$ in  $P(\omega)$, we have two additional subsets of $P(\omega)$ namely $\mathcal{I}^{\star}$ and $\mathcal{I}^+$, where $\mathcal{I}^{\star} := \{A \subset \omega : A^c \in \mathcal{I}\}$, the filter dual of $\mathcal{I}$ and $\mathcal{I}^+$:= collection of all subsets not in $\mathcal{I}$. Clearly, $\mathcal{I}^{\star} \subseteq \mathcal{I}^+$.
 A sequence $x = \{x_n\}_{n \in \omega}$ is said to be $\mathcal{I}$-convergent \cite{LD05} to $\xi$, denoted by $x_n \rightarrow _\mathcal{I} \xi$, if $\{n : x_n \notin {U}\} \in \mathcal{I}$, $\forall$ neighborhood ${U}$ of $\xi$. A sequence $x = \{x_n\}_{n \in \omega}$ of elements of $X$ is said to be $\mathcal{I^{\star}}$-convergent to $\xi$ if there exists a set $M := \{m_1 < m_2 <... < m_k <... \} \in \mathcal{I^{\star}}$ such that $\lim_{k \to \infty}x_{m_k} = \xi$. Lahiri and Das \cite{LD05} found an equivalence between $\mathcal{I}$ and $\mathcal{I^{\star}}$-convergences under certain assumptions.

\vspace{1mm}
In 2011, Macaj and Sleziak \cite{MS11} introduced the $\mathcal{I}^\mathcal{K}$-convergence of function in a topological space, which was derived from $\mathcal{I}^*$-convergence \cite{LD05} by simply replacing $Fin$ by an arbitrary ideal $\mathcal{K}$. \linebreak Interestingly, $\mathcal{I}^\mathcal{K}$-convergence arose as an independent mode of convergence. Comparisions of \linebreak $\mathcal{I}^\mathcal{K}$-convergence with $\mathcal{I}$-convergence \cite{KSW01} can be found in \cite{BP18, MS11, DDGB19}. A few articles for example \cite{DSS19, DST14} contributed to the study of $\mathcal{I}^\mathcal{K}$-convergence of sequence of functions. Some of the definitions and results of \cite{LD05, MS11} that are used in subsequent sections are listed below. Here $X$ is a topological space and $S$ is a set where ideals are defined.
%
We say that a function $f:S \rightarrow X$ is $\mathcal{I}^{\mathcal{M}}$-convergent to a point $x \in X$ if $\exists M \in \mathcal{I}^*$ such that the function $g:S \rightarrow X$ given by 
\begin{center}
	$g(s)= \begin{cases}
	f(s) ,& \mbox{$s \in M$}\\
	x ,& \mbox{$s \notin M$}	
	\end{cases}$
\end{center}
is $\mathcal{M}$-convergent to $x$, where $\mathcal{M}$ is a convergence mode via ideal.

\vspace{1mm}
If $\mathcal{M} = \mathcal{K}^*$, then $f:S \rightarrow X$ is said to be $\mathcal{I}^{\mathcal{K}^*}$-convergent \cite{MS11} to a point $x \in X$. Also, if $\mathcal{M} = \mathcal{K}$, then $f:S \rightarrow X$ is said to be $\mathcal{I}^{\mathcal{K}}$-convergent \cite{MS11} to a point $x \in X$.
In particular, if $X$ is a discrete space, our immediate observation is that only the $\mathcal{I}$-constant functions are $\mathcal{I}$-convergent, for a given ideal $\mathcal{I}$, $f:S \rightarrow X$ is an $\mathcal{I}$-constant function if it attains a constant value except for a set in $\mathcal{I}$. It follows that $\mathcal{I}$ and $\mathcal{I}^*$ convergence coincide for $X$. Thus, $\mathcal{I}^\mathcal{K}$ and $\mathcal{I}^{\mathcal{K}^*}$-convergence modes also coincide on discrete spaces.

\begin{lemma}{\rm{\cite[Lemma 2.1]{MS11}}}
	If $\mathcal{I}$ and $\mathcal{K}$ are two ideals on a set $S$ and $f: S \rightarrow X$ is a function such that $\mathcal{K}-\lim f = x$, then $\mathcal{I}^\mathcal{K}-\lim f = x$.
\end{lemma}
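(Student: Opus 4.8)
The plan is to unwind the definition of $\mathcal{I}^{\mathcal{K}}$-convergence and produce an explicit witness set $M \in \mathcal{I}^*$. Recall that $f$ is $\mathcal{I}^{\mathcal{K}}$-convergent to $x$ precisely when there exists some $M \in \mathcal{I}^*$ for which the modified function
\begin{center}
$g(s)= \begin{cases}
f(s), & \mbox{$s \in M$}\\
x, & \mbox{$s \notin M$}
\end{cases}$
\end{center}
satisfies $\mathcal{K}\text{-}\lim g = x$. So the task reduces to exhibiting one such $M$, and the entire argument is a matter of making the right choice.

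The natural candidate is $M = S$ itself. Since every ideal contains the empty set, we have $S^c = \emptyset \in \mathcal{I}$, and hence $S \in \mathcal{I}^*$ directly from the definition $\mathcal{I}^{\star} = \{A \subseteq S : A^c \in \mathcal{I}\}$. With this choice the second branch of the case distinction never triggers, so $g(s) = f(s)$ for every $s \in S$; that is, $g$ and $f$ coincide as functions.

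Therefore the hypothesis $\mathcal{K}\text{-}\lim f = x$ yields $\mathcal{K}\text{-}\lim g = x$ with no further work, and the witness $M = S \in \mathcal{I}^*$ certifies that $\mathcal{I}^{\mathcal{K}}\text{-}\lim f = x$, as required. I do not anticipate any genuine obstacle here: the only point worth isolating is that $S$ always belongs to $\mathcal{I}^*$, which is what permits the trivial modification $g = f$. Conceptually the lemma simply records that ordinary $\mathcal{K}$-convergence is the degenerate "$M = S$" instance sitting inside the family of $\mathcal{I}^{\mathcal{K}}$-modifications, so the inclusion of convergence modes is immediate once the definitions are laid side by side.
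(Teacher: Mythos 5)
Your proof is correct: taking $M = S$ (valid since $S^c = \emptyset \in \mathcal{I}$, so $S \in \mathcal{I}^*$) makes $g = f$, and the conclusion is immediate. The paper itself states this lemma without proof, citing \cite[Lemma 2.1]{MS11}, and your argument is exactly the standard one given there, so there is nothing to add.
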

%
\begin{remark}
	We say two ideals $\mathcal{I}$ and $ {\mathcal{K}}$ satisfy ideality condition if $\mathcal{I} \cup \mathcal{K}$ is an proper ideal \cite{HG09}. Again, $\mathcal{I}$ and $ {\mathcal{K}}$ satisfy ideality condition if and only if $S \neq I \cup K$, for all $I \in \mathcal{I}$, $K \in \mathcal{K}$. 
\end{remark}

The main results of this article are divided into 3 sections. Section 2 is devoted to a comparative study of different convergence modes for example $\mathcal{I}^\mathcal{K}$, $\mathcal{I}^{\mathcal{K}^*}$, $\mathcal{I}$, $\mathcal{K}$, $\mathcal{I} \cup \mathcal{K}$, $(\mathcal{I} \cup \mathcal{K})^*$ etc. We justify the existence of an ideal $\mathcal{J}$, such that the behavior of $\mathcal{I}^{\mathcal{K}}$ and $\mathcal{J}$-convergence coincides in Hausdorff spaces. Then in section 3, we introduce $\mathcal{I}^\mathcal{K}$-sequential space and study its properties. In Section 4 we basically define $\mathcal{I}^{\mathcal{K}}$-cluster point and $\mathcal{I}^{\mathcal{K}}$-limit point of a function in a topological space.  Here we observe that the ideality condition of $\mathcal{I}$ and $\mathcal{K}$ in $\mathcal{I}^\mathcal{K}$-convergence allows to get some effective conclusions. Moreover, we characterize the set of $\mathcal{I}^\mathcal{K}$-cluster points of a function as closed sets.
\vspace{1mm}

Throughout this paper we focus on the proper ideals \cite{HG09} containing $Fin$ ($S \notin \mathcal{I}$).

\section{$\mathcal{I}^\mathcal{K}$-convergence and several comparisons}
In this section, we study some more relations among different convergence modes $\mathcal{I}^\mathcal{K}$, $\mathcal{I}^{\mathcal{K}^*}$, $\mathcal{I} \cup \mathcal{K}$, $(\mathcal{I} \cup \mathcal{K})^*$ etc. We mainly focus on $\mathcal{I}^\mathcal{K}$-convergence where $\mathcal{I} \cup \mathcal{K}$ forms an ideal.  
\begin{proposition}\label{Lp}
	Let $X$ be a topological space and $f : S \rightarrow X$ be a function. Let $\mathcal{I}, \mathcal{K}$ be two ideals on $S$ such that $\mathcal{I} \cup \mathcal{K}$ is an ideal. Then
	\begin{itemize}
		\item[\textup{(i)}]	$ f(s) \rightarrow_{\mathcal{I}^{\mathcal{K}^*}} x \equiv f(s) \rightarrow_{(\mathcal{I} \cup \mathcal{K})^*} x$.
		\item[\textup{(ii)}] $f(s) \rightarrow_{\mathcal{I}^{\mathcal{K}}} x$ implies $f(s) \rightarrow_{\mathcal{I} \cup \mathcal{K}} x$.
	\end{itemize}
\end{proposition}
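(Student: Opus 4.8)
The plan is to reduce every convergence mode appearing in the statement to an elementary condition on the family of ``bad sets'' $A_U := \{s \in S : f(s) \notin U\}$, indexed by the neighbourhoods $U$ of $x$, and then to verify the two assertions by pure set algebra on these sets together with the defining closure properties of the ideals. The reformulations I would record at the outset are: $f \rightarrow_{\mathcal{K}} x$ means $A_U \in \mathcal{K}$ for every $U$; and for any ideal $\mathcal{J}$ on $S$ (here $\mathcal{J} = \mathcal{K}$ or $\mathcal{J} = \mathcal{I} \cup \mathcal{K}$) the starred convergence $f \rightarrow_{\mathcal{J}^*} x$ means that there is $N \in \mathcal{J}^*$, i.e. $N^c \in \mathcal{J}$, with $A_U \cap N \in Fin$ for every $U$. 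Feeding the auxiliary function $g$ (equal to $f$ on $M$ and to $x$ off $M$) into these reformulations, and using that $\{s : g(s) \notin U\} = A_U \cap M$, I obtain the working descriptions: $f \rightarrow_{\mathcal{I}^{\mathcal{K}}} x$ iff there is $M$ with $M^c \in \mathcal{I}$ and $A_U \cap M \in \mathcal{K}$ for all $U$; and $f \rightarrow_{\mathcal{I}^{\mathcal{K}^*}} x$ iff there are $M, N$ with $M^c \in \mathcal{I}$, $N^c \in \mathcal{K}$ and $A_U \cap M \cap N \in Fin$ for all $U$.

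For (i) I would match the two working descriptions to a common one. Starting from $f \rightarrow_{\mathcal{I}^{\mathcal{K}^*}} x$, set $I := M^c \in \mathcal{I}$ and $K := N^c \in \mathcal{K}$; then $M \cap N = I^c \cap K^c = (I \cup K)^c$, so the condition becomes: there exist $I \in \mathcal{I}$, $K \in \mathcal{K}$ with $A_U \cap (I \cup K)^c \in Fin$ for all $U$. The central observation is that $(I \cup K)^c$ ranges exactly over $(\mathcal{I} \cup \mathcal{K})^*$ as $I, K$ vary; here the hypothesis that $\mathcal{I} \cup \mathcal{K}$ is a proper ideal is what makes $(\mathcal{I} \cup \mathcal{K})^*$ a genuine dual filter and guarantees that $\{I \cup K : I \in \mathcal{I}, K \in \mathcal{K}\}$ really is the ideal $\mathcal{I} \cup \mathcal{K}$. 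Hence the displayed condition is precisely $f \rightarrow_{(\mathcal{I} \cup \mathcal{K})^*} x$, and the equivalence follows: the two implications are the same computation read in opposite directions, the reverse direction simply choosing $M = I^c$ and the $\mathcal{K}^*$-witness $K^c$ from a given $N = (I \cup K)^c \in (\mathcal{I}\cup\mathcal{K})^*$.

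For (ii) I would argue directly from the working description of $\mathcal{I}^{\mathcal{K}}$-convergence. Fix a witness $M$ with $I := M^c \in \mathcal{I}$ and $A_U \cap M \in \mathcal{K}$ for every $U$. For a fixed neighbourhood $U$, split along the partition $S = M \cup M^c$: one has $A_U = (A_U \cap M) \cup (A_U \cap I)$, where the first piece lies in $\mathcal{K}$ by hypothesis and the second is a subset of $I \in \mathcal{I}$, hence lies in $\mathcal{I}$ by downward closure. Therefore $A_U$ is the union of a member of $\mathcal{I}$ and a member of $\mathcal{K}$, that is $A_U \in \mathcal{I} \cup \mathcal{K}$; as $U$ was arbitrary, this is exactly $f \rightarrow_{\mathcal{I} \cup \mathcal{K}} x$.

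I do not expect a serious obstacle in either part: both collapse to complement/union bookkeeping once the modes are rewritten through the bad sets $A_U$. The only genuine care points are the correct unfolding of $\mathcal{K}^*$-convergence for the auxiliary function $g$ in part (i)---in particular tracking that the two successive restrictions by the $\mathcal{I}^*$-witness $M$ and the $\mathcal{K}^*$-witness $N$ collapse to a single restriction by $(I \cup K)^c$---and the explicit use of the ideality hypothesis to ensure that $\mathcal{I} \cup \mathcal{K}$ and its dual filter behave as required, so that the equivalence in (i) is not vacuous.
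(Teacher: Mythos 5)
Your proof is correct and follows essentially the same route as the paper's: in (i) you collapse the two witnesses $M\in\mathcal{I}^*$ and $N\in\mathcal{K}^*$ into the single set $M\cap N=(I\cup K)^{\complement}$ and read the computation in both directions, exactly as the paper does with its auxiliary functions $g$ and $h$; in (ii) your decomposition $A_U=(A_U\cap M)\cup(A_U\cap M^{\complement})$ is precisely the paper's inclusion $\{s: f(s)\notin U\}\subseteq\{s: f(s)\notin U,\ s\in M\}\cup\{s: s\notin M\}$. Your reformulation via the bad sets $A_U$ is only a notational streamlining, not a different argument.
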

\begin{proof}
	\begin{itemize}
		\item[\textup{(i)}] Let $f: S \rightarrow X$ be $\mathcal{I}^{\mathcal{K}^*}$-convergent. So, there exists a set $M \in \mathcal{I}^* $ for which the function $g : S \rightarrow X$ such that
		\begin{align*}
		g(s)= 
		\begin{cases}
		f(s) ,& s \in M\\
		x ,& s \notin M	
		\end{cases}
		\end{align*}
		is $\mathcal{K}^*$-convergent to $x$. So, there further exists a set $ N \in \mathcal{K}^*$ for which we can consider the function $h: S \rightarrow X$ such that
		
		\begin{align*}
		h(s) =  
		\begin{cases}
		f(s),& s \in M, ~ s \in N\\
		x,& s \notin M ~or~ s \notin N	
		\end{cases}
		\end{align*}		
		is $Fin$-convergent to $x$. Now, Let $K =N^\complement \in \mathcal{K}$, $I= M^\complement \in \mathcal{I}$ (say). Then 
		\begin{align*}
		h(s) = \begin{cases}
		f(s) ,& ~~~~s \in (I \cup K)^\complement      \\
		x ,&~~~~ s \notin (I \cup K)^\complement.    	
		\end{cases}
		\end{align*}
		In essence, we can conclude $f$ is $(\mathcal{I} \cup \mathcal{K})^*$-convergent to $x$.\\
		Conversely, the function $f: S \rightarrow X$ is $(\mathcal{I} \cup \mathcal{K})^*$-convergent to $x$. So, there exists a set $ P = (I \cup K)^\complement \in (\mathcal{I} \cup \mathcal{K})^*$ for which the function $h:S \rightarrow X$ such that
		\begin{align*}
		h(s)=
		\begin{cases}
		f(s) ,& s \in P\\
		x ,& s \notin P	
		\end{cases}
		\end{align*}
		\begin{align*}
		h(s)=
		\begin{cases}
		f(s) ,& s \in (I \cup K)^\complement\\
		x ,& s \notin (I \cup K)^\complement	
		\end{cases}
		\end{align*}
		is $Fin$-convergent to $x$. Lets consider the function $g : S \rightarrow X$ defined as
		\begin{align*}
		g(s) = 
		\begin{cases}
		f(s) ,& s \in I^\complement\\
		x ,& s \notin I^\complement	
		\end{cases}
		\end{align*}
		for which the function $h:S \rightarrow X$ such that
		\begin{align*}
		h(s)=
		\begin{cases}
		f(s) ,& s \in I^\complement,~s \in  K^\complement\\
		x ,& s \notin (I \cup K)^\complement	
		\end{cases}
		\end{align*}
		is $Fin$-convergent to $x$. 	
		Consequently $f$ is $\mathcal{I}^{\mathcal{K}^*}$-convergent to $x$.
		\item[\textup{(ii)}]
		Let $f:S \rightarrow X$ be $\mathcal{I}^{\mathcal{K}}$-convergent to $x$. So, there exists a set $ M \in \mathcal{I}^*$ for which the function $g:S \rightarrow X$ such that
		\begin{align*}
		g(s)= 
		\begin{cases}
		f(s) ,& s \in M\\
		x ,& s \notin M	
		\end{cases} 
		\end{align*}
		is $\mathcal{K}$-convergent to $x$. Then for each $\mathcal{U}_x$, neighborhood of $x$, we have $\{s : g(s) \notin \mathcal{U}_x \} \in \mathcal{K}$. Accordingly, the set given by $\{s : f(s) \notin \mathcal{U}_x, s \in M \} \in \mathcal{K}$.
		Further $\{s : f(s) \notin \mathcal{U}_x\} \subseteq \{ s : f(s) \notin \mathcal{U}_x, s \in M \} \cup \{s : s \notin M \}$.
		Hence, $\{ s : f(s) \notin \mathcal{U}_x\} \in \mathcal{I} \cup \mathcal{K}$.
	\end{itemize}
\end{proof}	
Following are immediate corollaries of the above proposition provided $\mathcal{I} \cup \mathcal{K}$ is an ideal.
\begin{corollary}
	$\mathcal{I}^{\mathcal{K}^*}$-convergence implies $\mathcal{I}-convergence$.
\end{corollary}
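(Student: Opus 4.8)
The plan is to read the corollary off Proposition \ref{Lp} together with one elementary fact about starred convergences. The first step is to invoke part (i): since $\mathcal{I}\cup\mathcal{K}$ is assumed to be an ideal, $\mathcal{I}^{\mathcal{K}^*}$-convergence of $f$ to $x$ is equivalent to $(\mathcal{I}\cup\mathcal{K})^*$-convergence. This replaces the two-layer definition of $\mathcal{I}^{\mathcal{K}^*}$ by a single starred convergence with respect to the combined ideal, and so reduces the problem to controlling the exceptional sets $\{s : f(s)\notin U\}$ directly.

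The second step is to unwind $(\mathcal{I}\cup\mathcal{K})^*$-convergence and apply the standard observation that, for any ideal $\mathcal{J}$ containing $Fin$, $\mathcal{J}^*$-convergence implies $\mathcal{J}$-convergence. Concretely, there is a set $P=(I\cup K)^\complement\in(\mathcal{I}\cup\mathcal{K})^*$ for which the modified function $h$ (equal to $f$ on $P$ and to $x$ off $P$) is $Fin$-convergent to $x$; hence for every neighborhood $U$ of $x$ the set $\{s\in P : f(s)\notin U\}$ is finite. Using the inclusion $\{s : f(s)\notin U\}\subseteq\{s\in P : f(s)\notin U\}\cup(I\cup K)$, the right-hand side is the union of a finite set with a member of $\mathcal{I}\cup\mathcal{K}$, and is therefore itself a member of $\mathcal{I}\cup\mathcal{K}$ (using again that $\mathcal{I}\cup\mathcal{K}$ is an ideal containing $Fin$). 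This is the self-contained core of the argument and needs no calculation beyond the one displayed inclusion.

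The step I expect to be the main obstacle is the final identification with $\mathcal{I}$-convergence as stated. The natural bound above certifies only that $\{s : f(s)\notin U\}\in\mathcal{I}\cup\mathcal{K}$, that is, it yields $(\mathcal{I}\cup\mathcal{K})$-convergence; landing in $\mathcal{I}$ alone would require absorbing the $\mathcal{K}$-part $K$ of $P^\complement$ into $\mathcal{I}$, which the hypotheses do not provide. Thus the delicate point is precisely whether the conclusion can be sharpened from the combined ideal to $\mathcal{I}$, and I would either supply an extra hypothesis relating $\mathcal{K}$ to $\mathcal{I}$ (e.g. $\mathcal{K}\subseteq\mathcal{I}$, whence $\mathcal{I}\cup\mathcal{K}=\mathcal{I}$ and the two coincide) or state the corollary with $(\mathcal{I}\cup\mathcal{K})$-convergence in place of $\mathcal{I}$-convergence; in both readings the two reduction steps above constitute the whole proof.
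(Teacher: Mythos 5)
Your two reduction steps are exactly the derivation the paper intends: it gives no separate proof, presenting this as an ``immediate corollary'' of Proposition \ref{Lp}, namely part (i) to replace $\mathcal{I}^{\mathcal{K}^*}$ by $(\mathcal{I}\cup\mathcal{K})^*$, followed by the standard fact that $\mathcal{J}^*$-convergence implies $\mathcal{J}$-convergence for any ideal $\mathcal{J}\supseteq Fin$. More importantly, the obstacle you flag at the end is not a defect of your argument but of the corollary itself: as printed, it is false without a containment hypothesis. Concretely, take $S=\omega$, $\mathcal{I}=Fin$, $\mathcal{K}=\mathcal{I}_0$ (so $\mathcal{I}\subseteq\mathcal{K}$ and $\mathcal{I}\cup\mathcal{K}=\mathcal{I}_0$ is genuinely an ideal, proper and containing $Fin$), let $X=\{0,1\}$ be discrete, and let $f=\chi_A$ for an infinite set $A$ of density zero. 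With $M=\omega\in\mathcal{I}^*$ the modified function is $f$ itself, and with $N=A^{\complement}\in\mathcal{K}^*$ the twice-modified function is identically $0$; hence $f$ is $\mathcal{K}^*$-convergent, a fortiori $\mathcal{I}^{\mathcal{K}^*}$-convergent, to $0$. Yet $\{s: f(s)\notin\{0\}\}=A\notin Fin=\mathcal{I}$, so $f$ is not $\mathcal{I}$-convergent. So your instinct that the $\mathcal{K}$-part of the exceptional set cannot be absorbed into $\mathcal{I}$ is correct, and no proof of the statement as written can exist.

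Both of your proposed repairs are the right ones. If ``$\mathcal{I}\cup\mathcal{K}$ is an ideal'' is read literally (the set-theoretic union closed under finite unions), the two ideals must in fact be comparable: for $I\in\mathcal{I}\setminus\mathcal{K}$ and $K\in\mathcal{K}\setminus\mathcal{I}$ the set $I\cup K$ can lie in neither family, since $I\cup K\in\mathcal{I}$ would force $K\in\mathcal{I}$ and $I\cup K\in\mathcal{K}$ would force $I\in\mathcal{K}$. In the case $\mathcal{K}\subseteq\mathcal{I}$ one has $\mathcal{I}\cup\mathcal{K}=\mathcal{I}$ and your argument already delivers the stated conclusion; in the case $\mathcal{I}\subsetneq\mathcal{K}$ the counterexample above applies (and, dually, the companion corollary asserting $\mathcal{K}$-convergence needs $\mathcal{I}\subseteq\mathcal{K}$). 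This is consonant with the paper's own Corollaries 2.4 and 2.5, quoted from \cite{BP18}, which carry precisely the provisos $\mathcal{K}\subseteq\mathcal{I}$ and $\mathcal{I}\subseteq\mathcal{K}$; the unconditioned conclusion that is actually available from Proposition \ref{Lp} is $(\mathcal{I}\cup\mathcal{K})$-convergence, exactly as you state.
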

\begin{corollary}
	$\mathcal{I}^{\mathcal{K}^*}$-convergence implies $\mathcal{K}-convergence$.
\end{corollary}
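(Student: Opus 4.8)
The plan is to route the statement through the equivalence of Proposition~\ref{Lp}(i) and then apply the elementary principle that, for any ideal $\mathcal{J}$ on $S$ containing $Fin$, star-convergence forces ordinary ideal convergence. I would first record this principle: if $f\to_{\mathcal{J}^*}x$, pick a witnessing $A\in\mathcal{J}^*$ (so $A^\complement\in\mathcal{J}$) along which $f|_A$ converges to $x$ in the $Fin$ sense; then for each neighbourhood $U$ of $x$,
\[
\{s:f(s)\notin U\}\subseteq\{s\in A:f(s)\notin U\}\cup A^\complement ,
\]
which is a finite set together with a member of $\mathcal{J}$, so $\{s:f(s)\notin U\}\in\mathcal{J}$ and $f\to_{\mathcal{J}}x$.

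Since $\mathcal{I}\cup\mathcal{K}$ is assumed to be an ideal, Proposition~\ref{Lp}(i) lets me replace $f\to_{\mathcal{I}^{\mathcal{K}^*}}x$ by $f\to_{(\mathcal{I}\cup\mathcal{K})^*}x$. Applying the principle with $\mathcal{J}=\mathcal{I}\cup\mathcal{K}$ then gives $f\to_{\mathcal{I}\cup\mathcal{K}}x$: writing the witnessing complement as $A^\complement=I\cup K$ with $I\in\mathcal{I}$ and $K\in\mathcal{K}$, each exceptional set $E_U:=\{s:f(s)\notin U\}$ satisfies $E_U\subseteq F\cup I\cup K$ for some finite $F$. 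The remaining task is to upgrade this to $E_U\in\mathcal{K}$, which is precisely $\mathcal{K}$-convergence.

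That final upgrade is where I expect the main obstacle to sit. The pieces $F$ and $K$ already lie in $\mathcal{K}$ (using $Fin\subseteq\mathcal{K}$), so the entire difficulty is the term $I\in\mathcal{I}$: nothing in the stated hypotheses forces $I\in\mathcal{K}$, and the analogous decomposition coming straight from the definition of $\mathcal{I}^{\mathcal{K}^*}$-convergence (a set $M\in\mathcal{I}^*$ with $g=f$ on $M$ and $g\to_{\mathcal{K}^*}x$) leaves an irreducible $M^\complement\in\mathcal{I}$ inside the exceptional set. Thus the honest conclusion of the straightforward chain is only membership in $\mathcal{I}\cup\mathcal{K}$; to pin $E_U$ down in $\mathcal{K}$ alone I would look for an additional relation guaranteeing the $\mathcal{I}$-part is absorbed into $\mathcal{K}$ — most cleanly $\mathcal{I}\subseteq\mathcal{K}$, under which $I\in\mathcal{K}$ and the argument closes at once. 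Isolating the weakest such hypothesis, and checking whether it is implicit in the ambient conventions of this section, is the crux I would have to settle before the corollary goes through.
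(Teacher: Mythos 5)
Your diagnosis is correct, and the obstacle you isolate is not an artifact of your particular route: the corollary is false as stated. The paper gives no separate proof (both corollaries are declared ``immediate'' from Proposition \ref{Lp}), and the only chain that proposition supports is exactly the one you ran, $\mathcal{I}^{\mathcal{K}^*}\Rightarrow(\mathcal{I}\cup\mathcal{K})^*\Rightarrow(\mathcal{I}\cup\mathcal{K})$-convergence, which strands the piece $I\in\mathcal{I}$ in the exceptional set. To see that no cleverer argument can succeed, take $S=\omega$, $\mathcal{K}=Fin$, $\mathcal{I}$ the ideal generated by the set $E$ of even numbers together with the finite sets, $X=\mathbb{R}$, and $f$ the characteristic function of $E$. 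With $M=\omega\setminus E\in\mathcal{I}^*$, the modified function $g$ is identically $0$, hence $\mathcal{K}^*$-convergent to $0$, so $f\rightarrow_{\mathcal{I}^{\mathcal{K}^*}}0$; but $\{s: f(s)\notin(-\frac{1}{2},\frac{1}{2})\}=E\notin Fin=\mathcal{K}$, so $f$ is not $\mathcal{K}$-convergent to $0$ (nor to any other point). Note that here $\mathcal{K}\subseteq\mathcal{I}$, so $\mathcal{I}\cup\mathcal{K}=\mathcal{I}$ is literally a proper ideal and the ideality condition of Remark 1.2 holds as well; the ambient proviso therefore does not exclude the counterexample under either of its possible readings.

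The weakest repair is the hypothesis you name, $\mathcal{I}\subseteq\mathcal{K}$: then $I\in\mathcal{K}$, so $E_U\subseteq F\cup I\cup K\in\mathcal{K}$ and your argument closes at once. This matches the paper's own handling of the unstarred case, where the corollary asserting that $\mathcal{I}^{\mathcal{K}}$-convergence implies $\mathcal{K}$-convergence explicitly carries the proviso $\mathcal{I}\subseteq\mathcal{K}$; indeed, since $\mathcal{K}^*$-convergence implies $\mathcal{K}$-convergence (all ideals here contain $Fin$), one has $\mathcal{I}^{\mathcal{K}^*}\Rightarrow\mathcal{I}^{\mathcal{K}}$, and the corrected statement follows from that corollary directly. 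To answer your final question: the containment is not implicit in the section's conventions. Read via Remark 1.2, the proviso only says the ideal generated by $\mathcal{I}\cup\mathcal{K}$ is proper, which imposes no containment at all; read literally (the family $\mathcal{I}\cup\mathcal{K}$ itself being an ideal), it forces $\mathcal{I}\subseteq\mathcal{K}$ or $\mathcal{K}\subseteq\mathcal{I}$, and the second alternative is precisely the counterexample regime above. Symmetrically, the companion corollary ($\mathcal{I}^{\mathcal{K}^*}$-convergence implies $\mathcal{I}$-convergence) requires $\mathcal{K}\subseteq\mathcal{I}$, so the two corollaries hold simultaneously only when $\mathcal{I}=\mathcal{K}$.
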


\noindent
Following results in \cite{BP18} are corollaries of the above proposition.
\begin{corollary}
	$\mathcal{I}^{\mathcal{K}}$-convergence implies $\mathcal{I}-convergence$ provided $\mathcal{K}\subseteq \mathcal{I}$.
\end{corollary}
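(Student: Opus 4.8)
The plan is to reduce the corollary directly to part (ii) of the preceding Proposition, the only real content being the elementary observation that the hypothesis $\mathcal{K} \subseteq \mathcal{I}$ collapses the union $\mathcal{I} \cup \mathcal{K}$ onto $\mathcal{I}$ itself. So the first step I would carry out is to record that whenever $\mathcal{K} \subseteq \mathcal{I}$ one has $\mathcal{I} \cup \mathcal{K} = \mathcal{I}$. Since throughout the paper $\mathcal{I}$ is a proper ideal on $S$ (as fixed in the introduction), the family $\mathcal{I} \cup \mathcal{K}$, being equal to $\mathcal{I}$, is automatically an ideal. This means the standing hypothesis of the Proposition, namely that $\mathcal{I} \cup \mathcal{K}$ be an ideal, is met for free and no separate ideality condition needs to be verified.

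With that in hand, the second step is simply to invoke part (ii) of the Proposition. For the given function $f : S \rightarrow X$ and point $x \in X$, that part yields that $f(s) \rightarrow_{\mathcal{I}^{\mathcal{K}}} x$ implies $f(s) \rightarrow_{\mathcal{I} \cup \mathcal{K}} x$. Substituting the identity $\mathcal{I} \cup \mathcal{K} = \mathcal{I}$ into the conclusion turns $f(s) \rightarrow_{\mathcal{I} \cup \mathcal{K}} x$ into $f(s) \rightarrow_{\mathcal{I}} x$, which is exactly the asserted implication. Thus $\mathcal{I}^{\mathcal{K}}$-convergence to $x$ forces $\mathcal{I}$-convergence to $x$ under $\mathcal{K} \subseteq \mathcal{I}$.

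There is essentially no obstacle here: the entire argument is the set-theoretic identity $\mathcal{I} \cup \mathcal{K} = \mathcal{I}$, after which the Proposition does all the work. The only point deserving a moment's care is making sure that $\mathcal{I} \cup \mathcal{K}$ genuinely qualifies as an ideal so that the Proposition is applicable, and this is immediate once the union is recognized as $\mathcal{I}$ itself. Hence I would keep the write-up to a single sentence invoking the containment and a single sentence invoking Proposition (ii).
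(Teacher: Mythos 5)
Your proposal is correct and matches the paper's own route exactly: the paper states this as an immediate corollary of Proposition \ref{Lp}(ii), and your observation that $\mathcal{K} \subseteq \mathcal{I}$ gives $\mathcal{I} \cup \mathcal{K} = \mathcal{I}$ (so the ideality hypothesis holds automatically and the conclusion $f(s) \rightarrow_{\mathcal{I} \cup \mathcal{K}} x$ becomes $f(s) \rightarrow_{\mathcal{I}} x$) is precisely the intended one-line argument.
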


\begin{corollary}
	$\mathcal{I}^{\mathcal{K}}$-convergence implies $\mathcal{K}-convergence$ provided $\mathcal{I}\subseteq \mathcal{K}$.
\end{corollary}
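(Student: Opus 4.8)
The plan is to obtain this as a one-line consequence of part (ii) of Proposition \ref{Lp}, the only work being to check that the standing hypothesis of that proposition is met. First I would observe that the inclusion $\mathcal{I} \subseteq \mathcal{K}$ forces the union to collapse, namely $\mathcal{I} \cup \mathcal{K} = \mathcal{K}$. Since $\mathcal{K}$ is by assumption an ideal on $S$, the family $\mathcal{I} \cup \mathcal{K}$ is then trivially an ideal, so the hypothesis ``$\mathcal{I} \cup \mathcal{K}$ is an ideal'' required by Proposition \ref{Lp} holds automatically and needs no separate verification.

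With this in place, the second step is simply to apply Proposition \ref{Lp}(ii): if $f(s) \rightarrow_{\mathcal{I}^{\mathcal{K}}} x$, then $f(s) \rightarrow_{\mathcal{I} \cup \mathcal{K}} x$. Substituting the identity $\mathcal{I} \cup \mathcal{K} = \mathcal{K}$ into the conclusion immediately yields $f(s) \rightarrow_{\mathcal{K}} x$, which is precisely $\mathcal{K}$-convergence. This is exactly the asserted implication.

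Because the whole argument reduces to invoking the preceding proposition after the set-theoretic simplification $\mathcal{I} \cup \mathcal{K} = \mathcal{K}$, there is no genuine obstacle to overcome. The only point deserving explicit mention is the verification that the ideal hypothesis of Proposition \ref{Lp} is satisfied; everything else is a direct substitution.
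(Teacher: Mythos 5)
Your proposal is correct and is exactly the route the paper intends: the corollary is stated as an immediate consequence of Proposition \ref{Lp}(ii), and your observation that $\mathcal{I} \subseteq \mathcal{K}$ gives $\mathcal{I} \cup \mathcal{K} = \mathcal{K}$ (so the ideal hypothesis holds for free and the conclusion reads $f(s) \rightarrow_{\mathcal{K}} x$) is the whole argument. Nothing is missing.
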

%

Following diagram shows the connections between different convergence modes.
\begin{center}
	$ \mathcal{I} \cup \mathcal{K} \leftarrow \mathcal{I}^{\mathcal{K}} \leftarrow \mathcal{I}^* \rightarrow (\mathcal{I} \cup \mathcal{K})^* \equiv \mathcal{I}^{\mathcal{K}^*} \rightarrow \mathcal{I}^{\mathcal{K}^{\mathcal{J}}} $
\end{center}

\smallskip \noindent

In this segment we are interested to find whether there exists an ideal $\mathcal{J}$ such that the behavior of $\mathcal{I}^{\mathcal{K}}$ and $\mathcal{J}$-convergence coincides. Recalling that a filter-base is a non empty collection closed under finite intersection, we have the following result for a given function $f$ in $X$ by taking an ideal-base to be complement of a filter-base.
\begin{lemma} \label{JIK}
	Let $\mathcal{I}$ and $\mathcal{K}$ be two ideals on $S$ satisfying ideality condition. $f: S \rightarrow X$ be a function on a topological space $X$. If $\mathcal{J}=$ ideal generated by $(\mathcal{K} \cup J)$, for any $J \in \mathcal{I}$. Then $f$ is $\mathcal{J}$-convergent to $x \implies f$ is $\mathcal{I}^\mathcal{K}$-convergent to $x$.
\end{lemma}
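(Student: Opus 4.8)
The plan is to unwind the definition of $\mathcal{I}^\mathcal{K}$-convergence and produce the single witnessing set $M \in \mathcal{I}^*$ directly from the fixed $J \in \mathcal{I}$ used to build $\mathcal{J}$. First I would record the explicit shape of the generated ideal. Since $\mathcal{K}$ is already closed under finite unions, adjoining the one set $J$ gives
\[
\mathcal{J} = \{A \subseteq S : A \subseteq K \cup J \text{ for some } K \in \mathcal{K}\}.
\]
This is where the ideality condition does its work: it guarantees $S \neq J \cup K$ for every $K \in \mathcal{K}$, so $S \notin \mathcal{J}$ and $\mathcal{J}$ is a proper ideal (and clearly $\mathcal{J} \supseteq \mathcal{K} \supseteq Fin$), which is what keeps the hypothesis of $\mathcal{J}$-convergence from being degenerate.

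Next I would make the natural choice $M := J^\complement = S \setminus J$. Since $M^\complement = J \in \mathcal{I}$, we have $M \in \mathcal{I}^*$, so $M$ is a legitimate candidate witness. Let $g : S \to X$ be the associated function, equal to $f$ on $M$ and to $x$ off $M$. Because $g(s) = x$ lies in every neighborhood $\mathcal{U}_x$ of $x$ whenever $s \notin M$, for each such neighborhood we have the identity
\[
\{s : g(s) \notin \mathcal{U}_x\} = \{s \in M : f(s) \notin \mathcal{U}_x\}.
\]
Thus the entire problem reduces to showing that the right-hand set belongs to $\mathcal{K}$ for every neighborhood $\mathcal{U}_x$ of $x$.

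Finally I would feed in the hypothesis. Since $f$ is $\mathcal{J}$-convergent to $x$, for each $\mathcal{U}_x$ the set $\{s : f(s) \notin \mathcal{U}_x\}$ lies in $\mathcal{J}$, so by the description above there is some $K = K(\mathcal{U}_x) \in \mathcal{K}$ with $\{s : f(s) \notin \mathcal{U}_x\} \subseteq K \cup J$. Intersecting with $M = J^\complement$ removes the $J$-part:
\[
\{s \in M : f(s) \notin \mathcal{U}_x\} = \{s : f(s) \notin \mathcal{U}_x\} \cap J^\complement \subseteq (K \cup J) \cap J^\complement \subseteq K \in \mathcal{K},
\]
and downward closure of $\mathcal{K}$ gives $\{s \in M : f(s) \notin \mathcal{U}_x\} \in \mathcal{K}$. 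Hence $g$ is $\mathcal{K}$-convergent to $x$, and together with the witness $M \in \mathcal{I}^*$ this is exactly the assertion that $f$ is $\mathcal{I}^\mathcal{K}$-convergent to $x$. I do not expect a serious obstacle here: the only genuine decision is the choice $M = J^\complement$, after which the set inclusions are routine. The one point worth flagging is conceptual rather than technical, namely that the ideality condition is precisely what prevents $M$ from lying in $\mathcal{K}$ (equivalently, what keeps $\mathcal{J}$ proper), and thereby ensures the conclusion is a meaningful convergence statement rather than a vacuous one.
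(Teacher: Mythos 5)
Your proposal is correct and follows essentially the same route as the paper: the same witness $M = J^\complement \in \mathcal{I}^*$, the same identity $\{s : g(s) \notin \mathcal{U}_x\} = \{s : f(s) \notin \mathcal{U}_x\} \setminus J$, and the same use of the generated ideal's structure $A \subseteq K \cup J$ to land in $\mathcal{K}$. Your write-up is in fact somewhat more explicit than the paper's (spelling out the form of $\mathcal{J}$ and noting that the ideality condition is what keeps $\mathcal{J}$ proper), but the argument is the same.
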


\begin{proof}
	Let $f$ be $\mathcal{J}$-convergent, where $\mathcal{J}$= ideal generated by the ideal base $(\mathcal{K} \cup J)$, for any $ J \in \mathcal{I}$.
	Now for $J= M^{c}$, consider the function $ g: S \rightarrow X$ defined as
	\begin{center}
		$g(s)=
		\begin{cases}
		f(s) ,& \mbox{$s \in M$}\\
		x ,& \mbox{$s \notin M$}.	
		\end{cases}$
	\end{center}
	\noindent
	Then,
	\begin{center}
		$\{s \in S:g(s) \notin V\} = \{s \in S: f(s) \notin V, s \in M\} \subseteq \{s \in S: f(s) \notin V\} \setminus \{s \in S: s \notin M\}$.
	\end{center}
	Again, $ \{s \in S: f(s) \notin V\} \setminus J$
$\subseteq (K \cup J) \setminus J$
$\in \mathcal{K}$.
	Subsequently, $g$ is $\mathcal{K}-convergent$ to $x$. Hence, $f$ is $\mathcal{I}^\mathcal{K}$-convergent to $x$.
\end{proof}

\begin{theorem}{\rm{\cite[Theorem 3.1]{BP18}}}
	In a Hausdorff space $X$, each function $f : S \rightarrow X$ possess a unique $\mathcal{I}^\mathcal{K}$-limit provided $\mathcal{I}\cup \mathcal{K}$ is an ideal.
\end{theorem}\noindent

\begin{theorem} \label{IKJ}
	Let $X$ be a Hausdorff Space. Let $f: S \rightarrow X$ be $\mathcal{I}^\mathcal{K}$-convergent to $x$. Then $\exists$ an ideal $\mathcal{J}$ such that $x \in X$ is an $\mathcal{I}^\mathcal{K}$-limit of the function $f$ if and only if $x$ is also a $\mathcal{J}$-limit of $f$ provided $\mathcal{I} \cup \mathcal{K}$ is an ideal.
\end{theorem}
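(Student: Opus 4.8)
The plan is to manufacture a single ideal $\mathcal{J}$, read off from the witness of the assumed $\mathcal{I}^\mathcal{K}$-convergence, that is fine enough for $\mathcal{J}$-convergence to force $\mathcal{I}^\mathcal{K}$-convergence and coarse enough for the reverse implication. The backward half is essentially free: Lemma~\ref{JIK} already asserts that whenever $\mathcal{J}$ is the ideal generated by $(\mathcal{K}\cup J)$ for some $J\in\mathcal{I}$, then $\mathcal{J}$-convergence to $x$ implies $\mathcal{I}^\mathcal{K}$-convergence to $x$. So the whole construction turns on choosing the right $J$ and proving \emph{only} the forward implication.

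First I would unwind the hypothesis. Since $f$ is $\mathcal{I}^\mathcal{K}$-convergent to $x$, there is a set $M\in\mathcal{I}^*$ for which the auxiliary function $g$ (equal to $f$ on $M$ and to $x$ off $M$) is $\mathcal{K}$-convergent to $x$. I then set $J:=M^\complement\in\mathcal{I}$ and let $\mathcal{J}$ be the ideal generated by $(\mathcal{K}\cup J)$. The ideality condition, i.e.\ that $\mathcal{I}\cup\mathcal{K}$ is a proper ideal, is exactly what guarantees $\mathcal{J}$ is a \emph{proper} ideal and makes Lemma~\ref{JIK} applicable, which already secures the direction ``$\mathcal{J}$-limit $\Rightarrow$ $\mathcal{I}^\mathcal{K}$-limit.''

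Next I would establish the forward direction, namely that $f$ is $\mathcal{J}$-convergent to $x$. For an arbitrary neighborhood $U$ of $x$ I would split the index set according to membership in $M$:
\begin{align*}
\{s: f(s)\notin U\}\ \subseteq\ \{s: f(s)\notin U,\ s\in M\}\ \cup\ M^\complement .
\end{align*}
On $M$ we have $f=g$, so $\{s: f(s)\notin U,\ s\in M\}\subseteq\{s: g(s)\notin U\}\in\mathcal{K}$ by the $\mathcal{K}$-convergence of $g$; call this $\mathcal{K}$-set $K'$. Since $M^\complement=J$, this gives $\{s: f(s)\notin U\}\subseteq K'\cup J$, which lies in $\mathcal{J}$ by construction. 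Hence $f$ is $\mathcal{J}$-convergent to $x$, and combined with Lemma~\ref{JIK} the two convergence notions agree at the point $x$.

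Finally, the Hausdorff hypothesis is what promotes this pointwise agreement to a genuine coincidence of limits: in a Hausdorff space both the $\mathcal{I}^\mathcal{K}$-limit and the $\mathcal{J}$-limit of $f$ are unique, so no point other than $x$ can be a limit in either sense. Consequently a point is an $\mathcal{I}^\mathcal{K}$-limit of $f$ precisely when it is a $\mathcal{J}$-limit, which is the desired equivalence. I expect the main obstacle to be conceptual rather than computational: the two set-inclusions are routine, but one must pin down the correct dependence of $\mathcal{J}$ on the witness $M$ and keep clearly separate the ``for some $J\in\mathcal{I}$'' used to \emph{define} $\mathcal{J}$ here from the ``for any $J\in\mathcal{I}$'' freedom stated in Lemma~\ref{JIK}; conflating the two, or forgetting that the ideality condition is what keeps $\mathcal{J}$ proper, is the easy slip to avoid.
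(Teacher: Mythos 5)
Your proposal is correct and follows essentially the same route as the paper: extract the witness $M\in\mathcal{I}^*$, set $J=M^\complement$, take $\mathcal{J}$ to be the ideal generated by $\mathcal{K}\cup J$, prove the forward direction via the decomposition $\{s: f(s)\notin U\}\subseteq\{s: f(s)\notin U,\ s\in M\}\cup M^\complement$, and obtain the converse from Lemma~\ref{JIK}. Your added remarks on why the ideality condition keeps $\mathcal{J}$ proper and on the ``fixed $J$'' versus ``any $J$'' distinction only make explicit what the paper leaves implicit in its arrow diagram.
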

\begin{proof}
	Let $f:S \rightarrow X$ is $\mathcal{I}^{\mathcal{K}}$-convergent to $x$. So, there exists a set $M \in \mathcal{I}^*$ such that $g:S \rightarrow X$ with
	\begin{align*}
		g(s)= 
		\begin{cases}
		f(s) ,& s \in M\\
		x ,& s \notin M	
		\end{cases}
	\end{align*}
	is $\mathcal{K}$-convergent to $x$. Consequently, for each neigfhborhood $\mathcal{U}_x$ of $x$. We have
	\begin{align*}
		\{s \in S: g(s) \notin \mathcal{U}_x \} \in \mathcal{K}.
	\end{align*}
	\begin{align*}
		  \implies \{s \in S : f(s) \notin \mathcal{U}_x, s \in M \} \in \mathcal{K}.
	\end{align*}
	Now, let $J= M^{c}$ and $(\mathcal{K} \cup J)$ is an ideal base provided $(\mathcal{I}\cup \mathcal{K})$ is an ideal. Now we consider $\mathcal{J}$, the ideal generated by $(\mathcal{K} \cup J)$. Then 
	\begin{align*}
		\{s \in S: f(s) \notin \mathcal{U}_x\} \subseteq \{ s \in S : f(s) \notin \mathcal{U}_x, s \in M \} \cup \{s \in S: s \notin M \}.
	\end{align*}
	Therefore, $\{ s \in S : f(s) \notin \mathcal{U}_x\} \in (\mathcal{K} \cup J)$.\\
Converse part of the proof is immediate by lemma \ref{JIK}.
\end{proof}

The following arrow diagram exhibit the equivalence shown in theorem \ref{IKJ}.
\begin{center}
	$ \mathcal{K} \xrightarrow{for\thinspace any\thinspace  J \in \mathcal{I}} \mathcal{J} \rightarrow \mathcal{I}^{\mathcal{K}} \xrightarrow{fixed \thinspace J \in \mathcal{I}} \mathcal{J} \rightarrow \mathcal{I} \cup {\mathcal{K}}$
\end{center}
Comprehensively, we may ask the following question.
\begin{question}
	Whether there exists an ideal $\mathcal{J}$ for $\mathcal{I}^\mathcal{K}$-convergence in a given non-Hausdorff topological space $X$ such that $\mathcal{I}^\mathcal{K} \equiv \mathcal{J}$-convergence?
\end{question}
\section{$\mathcal{I}^\mathcal{K}$-sequential space}
Recently, $\mathcal{I}$-sequential space were defined by S.K. Pal \cite{P14} for an ideal $\mathcal{I}$ on $\omega$. An equivalent definition was suggested by Zhou et al. \cite{ZLS19} and further obtain that class of $\mathcal{I}$-sequential spaces includes sequential spaces \cite{HNV04}. 

\vspace{1mm}
First, recall the notion of $\mathcal{I}$-sequential spaces. Let $X$ be a topological space and $O\subseteq X$ is $\mathcal{I}$-open if no sequence in $X \setminus O$ has an $\mathcal{I}$-limit in $O$. Equivalently, for each sequence $\{x_n: n \in \omega\} \subseteq X\setminus O$ with $x_n \rightarrow_{\mathcal{I}} x \in X$, then $x \in X \setminus O$. Now $X$ is said to be an $\mathcal{I}$-sequential space if and only if each $\mathcal{I}$-open subset of $X$ is open. 

\vspace{1mm}
Here we introduce a topological space namely $\mathcal{I}^\mathcal{K}$-sequential space for given ideals $\mathcal{I}$ and $\mathcal{K}$ on $\omega$.
\begin{definition}
	Let $X$ be a topological space and $O, A \subseteq X$. Then
	\begin{itemize}
		\item[(1)]  $O$ is said to be $\mathcal{I}^\mathcal{K}$-open if no sequence in $X \setminus O$ has an $\mathcal{I}^\mathcal{K}$-limit in $O$. Otherwise, for each sequence $\{x_n: n \in \omega\} \subseteq X\setminus O$ with $x_n \rightarrow_{\mathcal{I}^\mathcal{K}} x \in X$, then $x \in X \setminus O$.
		\item[(2)] A subset $F \subseteq X$ is said to be $\mathcal{I}^\mathcal{K}$-closed if $X \setminus A$ is $\mathcal{I}^\mathcal{K}$-open in $X$.
	\end{itemize}	
\end{definition}
\begin{remark} \label{R14}
	The following are obvious for a topological space $X$ and ideals $\mathcal{I}$ and $\mathcal{K}$ on $\omega$.
	\begin{itemize}
		\item[1.] Each open(closed) set of $X$ is $\mathcal{I}^\mathcal{K}$-open(closed).
		\item[2.] If $A$ and $B$ are $\mathcal{I}^\mathcal{K}$-open (closed), then $A \cup B$ is $\mathcal{I}^\mathcal{K}$-open (closed).
		\item[3.] A topological space $X$ is said to be an $\mathcal{I}^\mathcal{K}$-sequential space if and only if each $\mathcal{I}^\mathcal{K}$-open set of $X$ is open.
	\end{itemize}	
\end{remark}

for $\mathcal{I}=\mathcal{K}$, each $\mathcal{I}^\mathcal{K}$-sequential space coincides with a $\mathcal{I}$-sequential space.  
\begin{observation} \label{m}
	Let $\mathcal{M}_1, \mathcal{M}_2$ be two convergence modes in a topological space $X$ such that $\mathcal{M}_1$-convergence implies $\mathcal{M}_2$-convergence. Then $O \subseteq X$ is $\mathcal{M}_2$-open implies that $O$ is $\mathcal{M}_1$-open.
\end{observation}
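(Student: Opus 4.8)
The plan is to argue directly from the sequential reformulation of $\mathcal{M}$-openness recorded in the definition of $\mathcal{I}^\mathcal{K}$-open, reading the hypothesis as a single implication at the level of convergent sequences. The entire content of the statement is that enlarging the stock of convergent sequences makes the openness requirement more demanding, so I expect the proof to be a brief unwinding of definitions with no genuine obstacle; the only point needing care is the direction of the implications.

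First I would fix a set $O \subseteq X$ that is $\mathcal{M}_2$-open and aim to show that $O$ is $\mathcal{M}_1$-open. For this I would invoke the equivalent formulation of openness: it suffices to take an arbitrary sequence $\{x_n : n \in \omega\} \subseteq X \setminus O$ with $x_n \rightarrow_{\mathcal{M}_1} x$ for some $x \in X$, and to deduce that $x \in X \setminus O$. Next, the hypothesis $\mathcal{M}_1\text{-convergence} \implies \mathcal{M}_2\text{-convergence}$ applies verbatim to this very sequence, producing $x_n \rightarrow_{\mathcal{M}_2} x$ with the same limit $x$. Since the terms $x_n$ still lie in $X \setminus O$, the assumed $\mathcal{M}_2$-openness of $O$ forces $x \in X \setminus O$, which is exactly the conclusion required. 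Hence every $\mathcal{M}_2$-open set is $\mathcal{M}_1$-open, and the argument is complete.

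The step I would expect to be \emph{least} automatic is simply keeping the arrows straight, since the observation is really a statement of contravariance between the strength of a convergence mode and the associated notion of openness: because $\mathcal{M}_1$-convergence is the more restrictive requirement, fewer sequences $\mathcal{M}_1$-converge, so the defining condition for $\mathcal{M}_1$-openness is quantified over a \emph{smaller} family of sequences and is therefore \emph{easier} to satisfy. That is why the stronger property $\mathcal{M}_2$-open entails the weaker property $\mathcal{M}_1$-open. No separation or ideality hypotheses enter anywhere, so the observation holds for any pair of convergence modes linked by such an implication, and in particular justifies reading off openness relations from the convergence diagram established earlier.
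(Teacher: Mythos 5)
Your proof is correct and is essentially the paper's own argument: the paper merely states the same one-line unwinding in contrapositive form (if $O$ is not $\mathcal{M}_1$-open, a sequence in $X \setminus O$ $\mathcal{M}_1$-converges into $O$, hence $\mathcal{M}_2$-converges into $O$, so $O$ is not $\mathcal{M}_2$-open), whereas you argue directly. Your remark on the contravariance of convergence strength versus openness matches the intended content, and no extra hypotheses are needed in either version.
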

\begin{proof}
	Let $O$ be not $\mathcal{M}_1$-open in $X$, then $\exists \{x_n\}$ in $(X \setminus O)$ which is $\mathcal{M}_1$-convergent in $X$. So, $\{x_n\}$ is $(X \setminus O)$ is $\mathcal{M}_2$-convergent in $X$ and hence $O$ is not $\mathcal{M}_2$-convergent.
\end{proof}
\begin{corollary} \label{lc}
	Let $\mathcal{M}_1, \mathcal{M}_2$ be two convergence modes in $X$ such that $\mathcal{M}_1$-convergence implies $\mathcal{M}_2$-convergence in $X$. Then $X$ is a $\mathcal{M}_1$-sequential space implies that $X$ is $\mathcal{M}_2$-sequential space.
\end{corollary}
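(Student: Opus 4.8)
The plan is to deduce this directly from Observation \ref{m} by unwinding the definition of an $\mathcal{M}$-sequential space. Recall that $X$ is $\mathcal{M}$-sequential precisely when every $\mathcal{M}$-open subset of $X$ is (genuinely) open, so to pass from the $\mathcal{M}_1$-sequential hypothesis to the $\mathcal{M}_2$-sequential conclusion I only need to verify that every $\mathcal{M}_2$-open set is open. This reduces the corollary to a short chain of implications rather than any new topological argument.

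First I would fix a space $X$ that is $\mathcal{M}_1$-sequential and let $O \subseteq X$ be an arbitrary $\mathcal{M}_2$-open set. The crucial point is that the implication between the two convergence modes reverses direction when one passes to openness: since $\mathcal{M}_1$-convergence implies $\mathcal{M}_2$-convergence, Observation \ref{m} gives that every $\mathcal{M}_2$-open set is $\mathcal{M}_1$-open. Applying this to $O$, I conclude that $O$ is $\mathcal{M}_1$-open.

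Next, invoking the $\mathcal{M}_1$-sequential hypothesis on $X$, the $\mathcal{M}_1$-open set $O$ must in fact be open in $X$. Since $O$ was an arbitrary $\mathcal{M}_2$-open subset, this shows that every $\mathcal{M}_2$-open set of $X$ is open, which is exactly the statement that $X$ is $\mathcal{M}_2$-sequential. Composing these steps establishes the desired implication $\mathcal{M}_1$-sequential $\implies$ $\mathcal{M}_2$-sequential.

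There is no real obstacle in this argument; the only point requiring care is the contravariance already recorded in Observation \ref{m}, namely that the convergence implication $\mathcal{M}_1 \Rightarrow \mathcal{M}_2$ yields the openness implication $\mathcal{M}_2 \Rightarrow \mathcal{M}_1$ in the opposite order. One must therefore chain the two facts in the correct direction, applying Observation \ref{m} to convert $\mathcal{M}_2$-openness into $\mathcal{M}_1$-openness before exploiting the $\mathcal{M}_1$-sequential hypothesis.
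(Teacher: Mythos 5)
Your proposal is correct and is essentially the paper's own argument: the paper derives the corollary directly from Observation \ref{m} in exactly this way, using the contravariant passage from the convergence implication $\mathcal{M}_1 \Rightarrow \mathcal{M}_2$ to the openness implication $\mathcal{M}_2$-open $\Rightarrow$ $\mathcal{M}_1$-open, and then invoking the $\mathcal{M}_1$-sequential hypothesis. You have merely written out the short chain of implications that the paper leaves implicit.
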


The following is an example of a topological space which is not $\mathcal{I}^\mathcal{K}$-sequential space.
\begin{example}
	Let $S=[a, b]$ be a closed interval with the countable complement topology $\tau_{cc}$, where $a$, $b \in \mathbb{R}$. Let $A$ be any subset of $S$ and $x_n$ be a sequence in $A$, $\mathcal{I}^\mathcal{K}$-convergent to $x$, provided  $\mathcal{I}, \mathcal{K}$ and $\mathcal{I} \cup \mathcal{K}$ is an ideal i.e, $x_n \rightarrow_{\mathcal{I} \cup\mathcal{K}} x$. Consider the neighborhood $U$ of $x$, be the complement of the set $\{x_n: x_n \neq x\}$  in $S$. Then $x_n =x$ for all $n$ except for a set in the ideal $\mathcal{I} \cup \mathcal{K}$. Therefore, a sequence in any set $A$ can only $\mathcal{I}\cup \mathcal{K}$-convergent to an element of $A$ i.e $C$ is $\mathcal{I}\cup \mathcal{K}$-open. Thus every subset of $C$ is $\mathcal{I}^\mathcal{K}$-sequentially open. But not every subset of $S$ is open. Hence $([a, b]$, $\tau_{cc})$ is not $\mathcal{I}^\mathcal{K}$-sequential.
\end{example}

\begin{proposition}
	Let $X$ be a topological space and $\mathcal{I}_1, \mathcal{I}_2, \mathcal{K}_1, \mathcal{K}_2$ be ideals on $S$. Then the following implications hold:
	\begin{itemize}
		\item[(1)] For $\mathcal{K}_1 \subseteq \mathcal{K}_2$ whenever $U \subseteq X$ is $\mathcal{I}^{\mathcal{K}_2}$-open, then it is $\mathcal{I}^{\mathcal{K}_1}$-open. 
		\item[(2)] For $\mathcal{I}_1 \subseteq \mathcal{I}_2$ whenever $U \subseteq X$ is $\mathcal{I}_2^\mathcal{K}$-open, then it is $\mathcal{I}_1^\mathcal{K}$-open.
	\end{itemize}
\end{proposition}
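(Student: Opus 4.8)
The plan is to reduce both implications to Observation \ref{m}, which asserts that whenever one convergence mode implies another, the corresponding openness notions reverse. So for each part it suffices to establish the appropriate containment between the two $\mathcal{I}^\mathcal{K}$-type convergence modes and then quote the observation; no direct manipulation of open sets is needed.

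For part (1), I would first show that $\mathcal{K}_1 \subseteq \mathcal{K}_2$ forces $\mathcal{I}^{\mathcal{K}_1}$-convergence to imply $\mathcal{I}^{\mathcal{K}_2}$-convergence. Given a sequence $\{x_n\}$ that is $\mathcal{I}^{\mathcal{K}_1}$-convergent to $x$, I unwind the definition: there is a witness set $M \in \mathcal{I}^*$ for which the associated function $g$, equal to $x_n$ on $M$ and to $x$ off $M$, is $\mathcal{K}_1$-convergent to $x$. Since for every neighborhood $U$ of $x$ the set $\{n : g(n) \notin U\}$ lies in $\mathcal{K}_1 \subseteq \mathcal{K}_2$, the same function $g$ and the same witness $M$ certify that $\{x_n\}$ is $\mathcal{I}^{\mathcal{K}_2}$-convergent to $x$. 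Applying Observation \ref{m} with $\mathcal{M}_1 = \mathcal{I}^{\mathcal{K}_1}$ and $\mathcal{M}_2 = \mathcal{I}^{\mathcal{K}_2}$ then gives that every $\mathcal{I}^{\mathcal{K}_2}$-open set is $\mathcal{I}^{\mathcal{K}_1}$-open, as required.

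For part (2), the argument runs parallel, except that the monotonicity now enters through the dual filters. From $\mathcal{I}_1 \subseteq \mathcal{I}_2$ I would first record that $\mathcal{I}_1^* \subseteq \mathcal{I}_2^*$, since $A \in \mathcal{I}_1^*$ means $A^\complement \in \mathcal{I}_1 \subseteq \mathcal{I}_2$, that is, $A \in \mathcal{I}_2^*$. Consequently any witness set $M \in \mathcal{I}_1^*$ arising in an $\mathcal{I}_1^{\mathcal{K}}$-convergence already belongs to $\mathcal{I}_2^*$, so the very same $M$ and $g$ show that $\mathcal{I}_1^{\mathcal{K}}$-convergence implies $\mathcal{I}_2^{\mathcal{K}}$-convergence. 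Invoking Observation \ref{m} once more, now with $\mathcal{M}_1 = \mathcal{I}_1^{\mathcal{K}}$ and $\mathcal{M}_2 = \mathcal{I}_2^{\mathcal{K}}$, yields that every $\mathcal{I}_2^{\mathcal{K}}$-open set is $\mathcal{I}_1^{\mathcal{K}}$-open.

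The computations here are short; the only point that requires care is keeping the directions straight. Enlarging $\mathcal{K}$ or $\mathcal{I}$ enlarges the class of $\mathcal{I}^\mathcal{K}$-convergent sequences, and Observation \ref{m} then contravariantly shrinks the class of open sets, which is exactly why the larger ideal in the hypothesis produces openness relative to the smaller one. I expect this bookkeeping of the contravariance, rather than any genuine analytic difficulty, to be the only thing one must be careful to get right.
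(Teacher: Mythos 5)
Your proof is correct and takes essentially the same route as the paper: both arguments come down to the monotonicity of $\mathcal{I}^{\mathcal{K}}$-convergence in each ideal parameter, followed by an application of Observation \ref{m} to reverse the implication at the level of open sets. The only difference is that the paper cites Proposition 3.6 of \cite{MS11} for the monotonicity, whereas you verify it directly from the definition (the same witness $M \in \mathcal{I}^*$ and function $g$ serve for the larger ideal), which merely makes the argument self-contained.
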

\begin{proof}
	Let $f: S \rightarrow X$ be a function. Then by Proposition 3.6 in \cite{MS11},
	\begin{center}
		$\mathcal{I}_1^\mathcal{K}-\lim f = x  \implies \mathcal{I}_2^\mathcal{K}-\lim f = x$.\\
		$\mathcal{I}^{\mathcal{K}_1}-\lim f = x \implies \mathcal{I}^{\mathcal{K}_2} -\lim f = x$.
	\end{center}
	By lemma \ref{m} we have the required results correspondingly. 
\end{proof}
\begin{corollary}
	For $X$ be topological space and $ \mathcal{I}_1, \mathcal{I}_2, \mathcal{K}_1, \mathcal{K}_2$ be ideals on $\omega$ where $\mathcal{I}_1 \subseteq \mathcal{I}_2$, $\mathcal{K}_1 \subseteq \mathcal{K}_2$. Then the following observations are valid:
	\begin{itemize}
		\item[(1)] If $X$ is $\mathcal{I}_1^\mathcal{K}$-sequential, then it is $\mathcal{I}_2^\mathcal{K}$-sequential.
		\item[(2)] If $X$ is $\mathcal{I}^{\mathcal{K}_1}$-sequential, then it is $\mathcal{I}^{\mathcal{K}_2}$-sequential.
	\end{itemize}
\end{corollary}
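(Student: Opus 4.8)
The plan is to read this corollary as an immediate instance of the general transfer principle already recorded in Corollary~\ref{lc}, fed by the convergence monotonicity of $\mathcal{I}^\mathcal{K}$-limits. First I would recall the two one-line convergence implications, both taken from Proposition 3.6 of \cite{MS11} and already quoted in the proof of the preceding proposition: if $\mathcal{I}_1 \subseteq \mathcal{I}_2$ then $\mathcal{I}_1^\mathcal{K}$-convergence of a function implies its $\mathcal{I}_2^\mathcal{K}$-convergence to the same point, and if $\mathcal{K}_1 \subseteq \mathcal{K}_2$ then $\mathcal{I}^{\mathcal{K}_1}$-convergence implies $\mathcal{I}^{\mathcal{K}_2}$-convergence. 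Since convergence of a sequence is the special case $S=\omega$, these yield the corresponding implications between the two sequential convergence modes.

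For part (1) I would set $\mathcal{M}_1 := \mathcal{I}_1^\mathcal{K}$ and $\mathcal{M}_2 := \mathcal{I}_2^\mathcal{K}$. The implication above says precisely that $\mathcal{M}_1$-convergence implies $\mathcal{M}_2$-convergence, so Corollary~\ref{lc} applies verbatim and gives that an $\mathcal{M}_1$-sequential space is $\mathcal{M}_2$-sequential, i.e.\ $\mathcal{I}_1^\mathcal{K}$-sequential implies $\mathcal{I}_2^\mathcal{K}$-sequential. Part (2) is identical with $\mathcal{M}_1 := \mathcal{I}^{\mathcal{K}_1}$ and $\mathcal{M}_2 := \mathcal{I}^{\mathcal{K}_2}$.

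If one prefers to avoid invoking Corollary~\ref{lc} as a black box, the same conclusion unwinds directly from the preceding proposition together with the definition of a sequential space in Remark~\ref{R14}(3). For (1), let $O \subseteq X$ be $\mathcal{I}_2^\mathcal{K}$-open; by part (2) of the preceding proposition (the case $\mathcal{I}_1 \subseteq \mathcal{I}_2$) the set $O$ is then $\mathcal{I}_1^\mathcal{K}$-open, and since $X$ is assumed $\mathcal{I}_1^\mathcal{K}$-sequential every such set is open, so $O$ is open; as $O$ was an arbitrary $\mathcal{I}_2^\mathcal{K}$-open set, $X$ is $\mathcal{I}_2^\mathcal{K}$-sequential. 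Part (2) runs the same way using part (1) of the preceding proposition.

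The only point that needs care — and the step I would watch most closely — is the bookkeeping of directions: enlarging the ideal enlarges the convergence class, which \emph{shrinks} the family of open sets and hence reverses the inclusion at the level of openness (this reversal is exactly Observation~\ref{m}), whereas at the level of sequential spaces the implication runs in the \emph{same} direction as convergence. Verifying that the hypotheses $\mathcal{I}_1 \subseteq \mathcal{I}_2$ and $\mathcal{K}_1 \subseteq \mathcal{K}_2$ orient the monotonicity from \cite{MS11} correctly for Corollary~\ref{lc} is the whole content of the argument; beyond this orientation check no genuine obstacle remains.
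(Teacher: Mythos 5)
Your proposal is correct and follows exactly the route the paper intends: the corollary is an immediate consequence of the preceding proposition (equivalently, of the monotonicity from Proposition 3.6 of \cite{MS11}) combined with Corollary~\ref{lc}, which is precisely the transfer principle you invoke. Your direction-checking of the inclusion reversal at the level of open sets (Observation~\ref{m}) versus the same-direction implication for sequential spaces matches the paper's bookkeeping, so there is nothing to add.
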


\begin{theorem}\label{TOIK}
	In a topological space $X$, if $O$ is open then $O$ is $\mathcal{I}^\mathcal{K}$-open.
\end{theorem}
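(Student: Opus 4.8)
The plan is to verify the defining condition for $\mathcal{I}^\mathcal{K}$-openness directly. I take an arbitrary sequence $\{x_n\}_{n\in\omega}\subseteq X\setminus O$ with $x_n\rightarrow_{\mathcal{I}^\mathcal{K}} x$ for some $x\in X$, and aim to show that the limit $x$ must lie in $X\setminus O$. I argue by contradiction: assume instead that $x\in O$, and exploit the openness of $O$ by using $O$ itself as a neighbourhood of $x$.

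First I would unpack the $\mathcal{I}^\mathcal{K}$-convergence. By definition there is a set $M\in\mathcal{I}^*$ for which the modified function $g:\omega\rightarrow X$, given by $g(n)=x_n$ when $n\in M$ and $g(n)=x$ when $n\notin M$, is $\mathcal{K}$-convergent to $x$. Since $O$ is open and we are assuming $x\in O$, the set $O$ is a neighbourhood of $x$, so $\{n:g(n)\notin O\}\in\mathcal{K}$. The key computation is to identify this set: for $n\notin M$ we have $g(n)=x\in O$, while for $n\in M$ we have $g(n)=x_n\in X\setminus O$, hence $g(n)\notin O$. Thus $\{n:g(n)\notin O\}=M$, and therefore $M\in\mathcal{K}$.

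This is where the main obstacle lies. On the one hand $M\in\mathcal{K}$; on the other hand $M\in\mathcal{I}^*$ forces $M^\complement\in\mathcal{I}$. Consequently $S=M\cup M^\complement$ is exhibited as the union of a member of $\mathcal{K}$ and a member of $\mathcal{I}$, which contradicts the ideality condition, equivalently the properness of the ideal $\mathcal{I}\cup\mathcal{K}$. This contradiction shows $x\notin O$, i.e. $x\in X\setminus O$, so $O$ is $\mathcal{I}^\mathcal{K}$-open. I should stress that this final step genuinely requires $\mathcal{I}\cup\mathcal{K}$ to be a proper ideal: if the ideality condition fails one can pick $\mathcal{I},\mathcal{K}$ so that every sequence $\mathcal{I}^\mathcal{K}$-converges to every point, and then no proper open set is $\mathcal{I}^\mathcal{K}$-open, so the hypothesis that $\mathcal{I}\cup\mathcal{K}$ is an ideal is essential and should be carried along.

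A more economical route to the same contradiction is to invoke Proposition \ref{Lp}(ii) in place of the explicit manipulation of $g$: since $\mathcal{I}\cup\mathcal{K}$ is an ideal, $x_n\rightarrow_{\mathcal{I}^\mathcal{K}}x$ yields $x_n\rightarrow_{\mathcal{I}\cup\mathcal{K}}x$, whence $\{n:x_n\notin O\}\in\mathcal{I}\cup\mathcal{K}$; but $\{x_n\}\subseteq X\setminus O$ makes this set all of $\omega$, again contradicting the properness of $\mathcal{I}\cup\mathcal{K}$. Either way, the conclusion is that every open set is $\mathcal{I}^\mathcal{K}$-open, which is the assertion of the theorem and matches item~1 of Remark \ref{R14}.
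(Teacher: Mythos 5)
Your proposal is correct, and it follows the same basic strategy as the paper's proof of Theorem \ref{TOIK} --- use the openness of $O$ to produce a neighbourhood of the putative limit containing no term of the sequence --- but you carry it one essential step further than the paper does. The paper's proof reads: pick $y\in O$, take a neighbourhood $U\subseteq O$ of $y$, note that $U$ contains no term of $\{x_n\}$, and conclude at once that ``$y$ is not an $\mathcal{I}^{\mathcal{K}}$-limit of the sequence.'' That last inference is precisely the point you make explicit: since the auxiliary function $g$ equals $y$ off $M$, having no terms of the sequence in $U$ only yields $\{n : g(n)\notin U\}=M$, so ruling out $\mathcal{K}$-convergence of $g$ requires $M\notin\mathcal{K}$ for every $M\in\mathcal{I}^*$, i.e.\ $\mathcal{I}^*\cap\mathcal{K}=\emptyset$, which is exactly the ideality condition (properness of $\mathcal{I}\cup\mathcal{K}$). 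Your observation that this hypothesis is genuinely needed is a real correction rather than pedantry: as you note, if some $M\in\mathcal{I}^*\cap\mathcal{K}$ exists, then every sequence $\mathcal{I}^{\mathcal{K}}$-converges to every point, and no nonempty proper open set is $\mathcal{I}^{\mathcal{K}}$-open --- so the theorem as stated in Section~3, where the ideality condition is not assumed (it is adopted only in Section~4), is false without it, and the paper's one-line conclusion silently uses it. Your alternative route via Proposition \ref{Lp}(ii) is also valid and is the most economical formal derivation, at the cost of invoking the same ideality hypothesis through that proposition. In short: same approach as the paper, but your version closes a genuine gap in the paper's argument and correctly identifies the missing hypothesis.
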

\begin{proof}
	Let $O$ be open and $\{x_n\}$ be a sequence in $X \setminus O$. Let $y \in O$. Then there is a neighborhood $U$ of $y$ which contained in $O$. Hence $U$ can not contain any term of $\{x_n\}$. So $y$ is not an $\mathcal{I}^{\mathcal{K}}$-limit of the sequence and $O$ is $\mathcal{I}^{\mathcal{K}}$-open.
\end{proof}

\begin{theorem}
	In a metric space X, the notions of open and $\mathcal{I}^\mathcal{K}$-open coincide.
\end{theorem}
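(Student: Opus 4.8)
The plan is to prove the converse of Theorem \ref{TOIK}: in a metric space every $\mathcal{I}^\mathcal{K}$-open set is open. Since Theorem \ref{TOIK} already gives that open implies $\mathcal{I}^\mathcal{K}$-open, the two notions will then coincide. I would argue the converse by contraposition: assuming $O \subseteq X$ is not open, I will exhibit a sequence in $X \setminus O$ that is $\mathcal{I}^\mathcal{K}$-convergent to a point of $O$, which by definition shows $O$ is not $\mathcal{I}^\mathcal{K}$-open.

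First I would use the metric to produce the witnessing sequence. Because $O$ is not open, there is a point $y \in O$ such that every neighborhood of $y$ meets $X \setminus O$. Exploiting the metric $d$, for each $n \in \omega$ I pick $x_n \in (X \setminus O) \cap B(y, 1/n)$, where $B(y,1/n)$ is the open ball of radius $1/n$ about $y$. The sequence $\{x_n\}$ then lies entirely in $X \setminus O$ and converges to $y$ in the ordinary sense, i.e. every neighborhood $U$ of $y$ contains all but finitely many of the $x_n$.

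Next I would upgrade this ordinary convergence to $\mathcal{I}^\mathcal{K}$-convergence. For any neighborhood $U$ of $y$ the set $\{n : x_n \notin U\}$ is finite, so it belongs to $Fin$ and hence to $\mathcal{K}$ (recall the standing assumption that all ideals here contain $Fin$); thus $x_n \to_{\mathcal{K}} y$. Applying the first Lemma of the excerpt, \cite[Lemma 2.1]{MS11}, which states that $\mathcal{K}$-convergence implies $\mathcal{I}^\mathcal{K}$-convergence, we obtain $x_n \to_{\mathcal{I}^\mathcal{K}} y$. We now have a sequence in $X \setminus O$ whose $\mathcal{I}^\mathcal{K}$-limit $y$ lies in $O$, contradicting $\mathcal{I}^\mathcal{K}$-openness of $O$. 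Therefore $O$ is open, and together with Theorem \ref{TOIK} the equivalence follows. Equivalently, one may package the last two paragraphs through Observation \ref{m}: ordinary convergence implies $\mathcal{I}^\mathcal{K}$-convergence, so $\mathcal{I}^\mathcal{K}$-open implies sequentially open, which in a metric space is the same as open.

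The argument in fact uses metrizability only through first countability (to extract a convergent sequence from the failure-of-openness witness) together with $Fin \subseteq \mathcal{K}$. The step I expect to require the most care is the selection of the $x_n$ and the verification that they genuinely converge to $y$ — that the shrinking balls $B(y,1/n)$ force ordinary convergence — since the whole chain \emph{usual} $\Rightarrow \mathcal{K} \Rightarrow \mathcal{I}^\mathcal{K}$ rests on feeding in a sequence that actually converges in the metric.
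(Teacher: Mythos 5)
Your proposal is correct and follows essentially the same route as the paper: forward direction by Theorem \ref{TOIK}, and for the converse, picking $x_n \in (X \setminus O) \cap B(y, \frac{1}{n+1})$ at a witness point $y$ of non-openness so that $x_n \to y$, hence $x_n \to_{\mathcal{I}^\mathcal{K}} y$. The only difference is that you make explicit the chain (usual $\Rightarrow$ $\mathcal{K}$ via $Fin \subseteq \mathcal{K}$, then $\mathcal{K} \Rightarrow \mathcal{I}^\mathcal{K}$ via \cite[Lemma 2.1]{MS11}) that the paper leaves implicit, which is a welcome clarification rather than a deviation.
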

\begin{proof}
	Forward implication is obvious from Theorem \ref{TOIK}.\\
	Conversely, Let $O$ be not open i.e., $\exists y \in O$ such that for all neighborhood of $y$ intersect $(X \setminus O)$. Let $x_n \in (X \setminus O) \cap B(y, \frac{1}{n+1})$. Then $x_n \rightarrow y$. Hence $x_n$ is $\mathcal{I}^\mathcal{K}$-convergent to $y$. Thus $O$ is not $\mathcal{I}^\mathcal{K}$-open.
\end{proof}

\begin{theorem}
	Every first countable space is $\mathcal{I}^\mathcal{K}$-sequential space. 
\end{theorem}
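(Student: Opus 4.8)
The plan is to unwind the definition of $\mathcal{I}^\mathcal{K}$-sequential space and argue by contraposition. Since $X$ is $\mathcal{I}^\mathcal{K}$-sequential exactly when every $\mathcal{I}^\mathcal{K}$-open set is open, it suffices to show that in a first countable space a set which fails to be open also fails to be $\mathcal{I}^\mathcal{K}$-open. I note in passing that there is a quick structural route: by \cite[Lemma 2.1]{MS11} together with the standing convention $Fin \subseteq \mathcal{K}$, ordinary convergence implies $\mathcal{I}^\mathcal{K}$-convergence, so Corollary \ref{lc} already yields that every sequential space (in particular every first countable space) is $\mathcal{I}^\mathcal{K}$-sequential. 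Nevertheless I would prefer to give the direct construction below, both because it is self-contained and because it parallels the argument used in the metric-space theorem immediately preceding.

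First I would fix a set $O \subseteq X$ that is \emph{not} open and produce a witness sequence. Non-openness gives a point $y \in O$ that is not interior to $O$, so every neighborhood of $y$ meets $X \setminus O$. Invoking first countability, I would choose a countable neighborhood base at $y$ and replace it by the nested base $U_1 \supseteq U_2 \supseteq \cdots$ obtained from finite intersections. Because $y$ is not interior, each $U_n$ meets $X \setminus O$, so I may pick $x_n \in U_n \cap (X \setminus O)$. The nesting then forces $x_n \to y$ in the ordinary sense: given any neighborhood $V$ of $y$, some $U_N \subseteq V$, whence $x_n \in U_n \subseteq U_N \subseteq V$ for all $n \geq N$, so $\{n : x_n \notin V\}$ is finite.

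Finally I would upgrade this ordinary convergence to $\mathcal{I}^\mathcal{K}$-convergence. Ordinary convergence of $\{x_n\}$ to $y$ says $\{n : x_n \notin V\} \in Fin$ for every neighborhood $V$, and since every ideal here contains $Fin$ we get $\{n : x_n \notin V\} \in \mathcal{K}$, i.e.\ $\mathcal{K}\text{-}\lim = y$; by \cite[Lemma 2.1]{MS11} this gives $\mathcal{I}^\mathcal{K}\text{-}\lim = y$. Thus $\{x_n\} \subseteq X \setminus O$ has the $\mathcal{I}^\mathcal{K}$-limit $y \in O$, so $O$ is not $\mathcal{I}^\mathcal{K}$-open. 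By contraposition every $\mathcal{I}^\mathcal{K}$-open set is open, and $X$ is $\mathcal{I}^\mathcal{K}$-sequential. There is no serious obstacle in this argument; the only point requiring care is passing to the nested base so that the constructed sequence genuinely converges to $y$, and correctly citing the chain $Fin$-convergence $\Rightarrow$ $\mathcal{K}$-convergence $\Rightarrow$ $\mathcal{I}^\mathcal{K}$-convergence that promotes ordinary convergence to the desired mode.
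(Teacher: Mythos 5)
Your argument is correct and is essentially the paper's own proof: the paper likewise fixes a non-open $A$, picks a point $y$ with a decreasing countable base $\{U_n\}$, chooses $x_n \in (X \setminus A) \cap U_n$, observes $x_n \to y$ hence $x_n \rightarrow_{\mathcal{K}} y$, and concludes via \cite[Lemma 2.1]{MS11} that $A$ is not $\mathcal{I}^{\mathcal{K}}$-open. Your write-up is in fact slightly more careful than the paper's, since you make explicit both the passage to a nested base and the chain $Fin \Rightarrow \mathcal{K} \Rightarrow \mathcal{I}^{\mathcal{K}}$ that the paper leaves implicit.
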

\begin{proof}
	We need to prove the reverse implication.\\
	If $A \subset X$ be not open. Then $\exists y \in A$ such that every neighborhood of $y$ intersects $X \setminus A$. Let $ \{U_n : n \in \omega \} $ be a decreasing countable basis at $y$ (say). Consider $x_n \in (X \setminus A) \cap U_n$. Then for each neighborhood $V$ of $y$, $\exists n \in \omega$ with $U_n \subset V$. So, $x_m \in V, \forall m \geq n$ i.e $x_n \rightarrow y$. Hence $x_n \rightarrow_\mathcal{K} y$. Therefore, $A$ is not $\mathcal{I}^\mathcal{K}$-open.
\end{proof}

The following theorem about continuous mapping was also proved by Banerjee et al. \cite{BP18}. However, we have given here an alternative approach to prove.
\begin{theorem}\label{IKc}
	Every continuous function preserves  $\mathcal{I}^\mathcal{K}$-convergence.
\end{theorem}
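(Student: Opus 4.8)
The plan is to show that continuity interacts well with the two-stage structure of $\mathcal{I}^\mathcal{K}$-convergence. Suppose $\phi : X \to Y$ is continuous and that a sequence $\{x_n\}$ in $X$ satisfies $x_n \rightarrow_{\mathcal{I}^\mathcal{K}} x$; I want to conclude $\phi(x_n) \rightarrow_{\mathcal{I}^\mathcal{K}} \phi(x)$ in $Y$. By the definition of $\mathcal{I}^\mathcal{K}$-convergence, there is a set $M \in \mathcal{I}^*$ so that the modified sequence $g$, which equals $x_n$ for $n \in M$ and equals $x$ for $n \notin M$, is $\mathcal{K}$-convergent to $x$. The natural idea is to push this entire witnessing structure through $\phi$: take the same set $M \in \mathcal{I}^*$ and form the sequence $\phi \circ g$, which equals $\phi(x_n)$ on $M$ and equals $\phi(x)$ off $M$.

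First I would reduce the problem to the single-stage statement that continuous functions preserve ordinary $\mathcal{K}$-convergence. Once that is in hand, the reduction is essentially bookkeeping: applying $\phi$ to $g$ gives exactly the sequence that witnesses $\phi(x_n) \rightarrow_{\mathcal{I}^\mathcal{K}} \phi(x)$ with the very same $M \in \mathcal{I}^*$, because $\phi(g(n))$ equals $\phi(x_n)$ for $n \in M$ and $\phi(x)$ for $n \notin M$, which is precisely the modified version of $\{\phi(x_n)\}$ relative to $M$. So the heart of the matter is to verify that $g \rightarrow_\mathcal{K} x$ implies $\phi \circ g \rightarrow_\mathcal{K} \phi(x)$.

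For that core step I would argue directly from the neighborhood formulation. Let $V$ be any neighborhood of $\phi(x)$ in $Y$. By continuity of $\phi$ at $x$, the preimage $\phi^{-1}(V)$ is a neighborhood of $x$ in $X$. Since $g \rightarrow_\mathcal{K} x$, the set $\{n : g(n) \notin \phi^{-1}(V)\}$ lies in $\mathcal{K}$. The key set-theoretic observation is the inclusion $\{n : \phi(g(n)) \notin V\} \subseteq \{n : g(n) \notin \phi^{-1}(V)\}$ — indeed these two sets coincide — so the former belongs to $\mathcal{K}$ as well, since $\mathcal{K}$ is closed under subsets. As $V$ was an arbitrary neighborhood of $\phi(x)$, this gives $\phi \circ g \rightarrow_\mathcal{K} \phi(x)$, completing the argument.

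I do not expect a genuine obstacle here; the proof is a straightforward unwinding of definitions, and the author's own framing (\emph{an alternative approach}) suggests the point is stylistic rather than technical. The one place demanding mild care is confirming that the modified sequence associated to $\{\phi(x_n)\}$ and $M$ is literally $\phi \circ g$, so that no new witnessing set beyond the original $M \in \mathcal{I}^*$ is needed; getting this identification exactly right is what makes the single application of $\phi$ suffice and keeps the two-stage structure intact.
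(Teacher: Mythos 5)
Your proof is correct and follows essentially the same route as the paper: fix the witness $M \in \mathcal{I}^*$, observe that $\phi \circ g$ is exactly the modified sequence for $\{\phi(x_n)\}$ relative to the same $M$, and reduce to the fact that continuous maps preserve $\mathcal{K}$-convergence. The only difference is that the paper cites this last fact (Theorem 3 of Lahiri--Das) while you prove it inline via the preimage-of-neighborhood argument; incidentally, your careful bookkeeping also corrects the paper's slip of writing $c \circ f$ and ``$\mathcal{K}$-convergent to $x$'' where $c \circ g$ and $c(x)$ are meant.
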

\begin{proof}
	Let $X$ and $Y$ be two topological spaces and  $c:X \rightarrow Y$ be a continuous function. Let $f:S \rightarrow X$ be $\mathcal{I}^{\mathcal{K}}$-convergent. So $\exists M \in \mathcal{I}^*$ such that $g: S \rightarrow X$ given by
	\begin{center}
		$g(s)$= 
		$\begin{cases}
		f(s) ,& \mbox{$s \in M$}\\
		x ,& \mbox{$s \notin M$}	
		\end{cases}$
	\end{center}
	is $\mathcal{K}$-convergent to $x$. Now the function $c \circ f : S \rightarrow Y$, the image function on $Y$ 
	is $\mathcal{K}$-convergent to $x$ by Theorem 3 in \cite{LD05}. Hence $c \circ f$ is $\mathcal{I}^{\mathcal{K}}$-convergent.
\end{proof}

\smallskip
We now recall the definition of a quotient space. Let $(X, \sim)$ be a topological space with an equivalence relation $\sim$ on $X$. Consider the projection mapping $\prod : X \rightarrow X \slash \sim$ (the set of equivalence classes) and taking $A \subset X \slash \sim$ to be open if and only if $\prod ^{-1} (A)$ is open in $X$, we have the quotient space $X \slash \sim$ induced by $\sim$ on $X$. 
\begin{theorem}\label{TQ}
	Every quotient space of an $\mathcal{I}^\mathcal{K}$-sequential space $X$ is $\mathcal{I}^\mathcal{K}$-sequential. 
\end{theorem}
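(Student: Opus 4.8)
The plan is to mirror the classical proof that quotients of sequential spaces are sequential, transporting everything through the projection $\prod : X \rightarrow X \slash \sim$ by means of the continuity--preservation result of Theorem \ref{IKc}. Write $Y = X \slash \sim$. To show that $Y$ is $\mathcal{I}^\mathcal{K}$-sequential, I must show that every $\mathcal{I}^\mathcal{K}$-open subset $O \subseteq Y$ is open. By the definition of the quotient topology, $O$ is open in $Y$ exactly when $\prod^{-1}(O)$ is open in $X$; and since $X$ is assumed $\mathcal{I}^\mathcal{K}$-sequential, it suffices to prove the weaker statement that $\prod^{-1}(O)$ is $\mathcal{I}^\mathcal{K}$-open in $X$. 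Thus the entire theorem reduces to the single implication: if $O$ is $\mathcal{I}^\mathcal{K}$-open in $Y$, then $\prod^{-1}(O)$ is $\mathcal{I}^\mathcal{K}$-open in $X$.

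First I would establish this implication by contraposition. Suppose $\prod^{-1}(O)$ fails to be $\mathcal{I}^\mathcal{K}$-open in $X$; then by definition there is a sequence $\{x_n\}$ lying in $X \setminus \prod^{-1}(O)$ that is $\mathcal{I}^\mathcal{K}$-convergent to some point $x \in \prod^{-1}(O)$. Applying the projection and invoking Theorem \ref{IKc} (continuity preserves $\mathcal{I}^\mathcal{K}$-convergence), the image sequence satisfies $\prod(x_n) \rightarrow_{\mathcal{I}^\mathcal{K}} \prod(x)$.

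Next I would read off the containments. Since $x_n \notin \prod^{-1}(O)$ is equivalent to $\prod(x_n) \notin O$, the sequence $\{\prod(x_n)\}$ lies entirely in $Y \setminus O$; and since $x \in \prod^{-1}(O)$ we have $\prod(x) \in O$. Hence $\{\prod(x_n)\}$ is a sequence in $Y \setminus O$ possessing an $\mathcal{I}^\mathcal{K}$-limit inside $O$, which is exactly a contradiction to the hypothesis that $O$ is $\mathcal{I}^\mathcal{K}$-open. This proves the reduction implication, and with it the theorem.

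The argument is essentially a transport of structure, so I do not anticipate a deep obstacle; the one point that needs care is the reduction chain itself --- specifically the observation that to conclude $\prod^{-1}(O)$ is genuinely \emph{open} one must invoke the $\mathcal{I}^\mathcal{K}$-sequentiality of the domain $X$, rather than attempting to verify openness directly. A secondary caution is to apply Theorem \ref{IKc} to the sequence viewed as a function $\omega \rightarrow X$, so that the ideals $\mathcal{I}, \mathcal{K}$ on $\omega$ are the same on both sides of the projection; since that preservation result is stated for arbitrary functions $f : S \rightarrow X$, taking $S = \omega$ handles this immediately.
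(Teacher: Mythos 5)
Your proposal is correct and is essentially the paper's own argument read in the contrapositive direction: both reduce openness of $O$ in $X/\sim$ to showing $\prod^{-1}(O)$ is $\mathcal{I}^{\mathcal{K}}$-open in $X$ (then invoking the $\mathcal{I}^{\mathcal{K}}$-sequentiality of $X$), and both transport the witnessing sequence through $\prod$ via Theorem \ref{IKc}. Your remark about applying Theorem \ref{IKc} with $S=\omega$ so the ideals agree on both sides is a careful touch the paper leaves implicit, but it does not change the route.
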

\begin{proof}
	Let $A \subset X \slash \sim$ be not open. Let $X/\sim$ is a quotient space with an equivalence relation $\sim$, $\prod ^{-1}(A)$ is not open in $X$ i.e., $\exists$ a sequence $\{x_n\}$ in $X \setminus \prod ^{-1}(A)$ which is $\mathcal{I}^{\mathcal{K}}$-convergent to $y \in \prod ^{-1}(A)$. Also $\prod$ is continuous, hence preserves $\mathcal{I}^{\mathcal{K}}$-convergence by Theorem \ref{IKc}. Therefore, $\prod (x_n) \in (X \slash \sim) \setminus A$ with $\mathcal{I}^{\mathcal{K}}$-limit $\prod(y) \in A$. So, $A$ is not $\mathcal{I}^{\mathcal{K}}$-open i.e., $X \slash \sim$ is $\mathcal{I}^{\mathcal{K}}$-sequential. 
\end{proof}

\smallskip
\noindent
Following result is immediate via Proposition \ref{lc}.
\begin{theorem}
	Every sequential space is an $\mathcal{I}^\mathcal{K}$-sequential space.
\end{theorem}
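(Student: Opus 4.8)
The plan is to recognize this statement as a direct instance of Corollary \ref{lc}, applied to the pair consisting of ordinary sequential convergence and $\mathcal{I}^\mathcal{K}$-convergence. Recall that a sequential space is precisely a space in which every sequentially open set is open, where $O$ is sequentially open if no sequence lying in $X\setminus O$ converges (in the usual sense) to a point of $O$. In the language of this section this is exactly the notion of $\mathcal{M}_1$-open with $\mathcal{M}_1$ taken to be ordinary convergence, so a sequential space is the same thing as an $\mathcal{M}_1$-sequential space; likewise an $\mathcal{I}^\mathcal{K}$-sequential space is an $\mathcal{M}_2$-sequential space for $\mathcal{M}_2 = \mathcal{I}^\mathcal{K}$-convergence.

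By Corollary \ref{lc}, it then suffices to establish the single convergence implication that ordinary convergence implies $\mathcal{I}^\mathcal{K}$-convergence. First I would check that ordinary convergence implies $\mathcal{K}$-convergence: if $x_n \to x$ in the usual sense, then for every neighborhood $U$ of $x$ the set $\{n : x_n \notin U\}$ is finite, hence lies in $Fin$, and since by our standing assumption every ideal contains $Fin$ we have $\{n : x_n \notin U\} \in \mathcal{K}$, so $x_n \to_\mathcal{K} x$. Next I would invoke the cited Lemma 2.1 of \cite{MS11}, which gives that $\mathcal{K}$-convergence to $x$ yields $\mathcal{I}^\mathcal{K}$-convergence to $x$. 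Chaining the two steps produces ordinary convergence $\implies \mathcal{K}$-convergence $\implies \mathcal{I}^\mathcal{K}$-convergence, which is exactly the hypothesis required to feed into Corollary \ref{lc}.

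With the convergence implication in hand, Corollary \ref{lc} immediately yields that every $\mathcal{M}_1$-sequential space is an $\mathcal{M}_2$-sequential space, that is, every sequential space is $\mathcal{I}^\mathcal{K}$-sequential. There is no substantial technical obstacle here; the only points warranting care are the bookkeeping that identifies the classical definition of a sequential space with the $\mathcal{M}$-open/$\mathcal{M}$-sequential formalism of this section (for ordinary convergence the two phrasings agree, via a routine subsequence argument), and the observation that it is precisely the assumption $Fin \subseteq \mathcal{K}$ that places ordinary convergence at the bottom of the ideal-convergence hierarchy and makes the whole chain go through.
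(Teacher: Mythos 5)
Your proposal is correct and follows exactly the paper's route: the paper also derives this theorem as an immediate consequence of Corollary \ref{lc}, with the underlying implication chain (ordinary convergence $\implies$ $\mathcal{K}$-convergence via $Fin \subseteq \mathcal{K}$, and $\mathcal{K}$-convergence $\implies$ $\mathcal{I}^{\mathcal{K}}$-convergence via Lemma 2.1 of \cite{MS11}) left implicit there. Your write-up merely spells out the bookkeeping the paper omits, which is a faithful completion rather than a different argument.
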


Recall that a topological space $X$ is said to be of countable tightness, if for $A \subseteq X$ and  $ x \in \bar{A}$, then $x \in \bar{C}$ for some countable subset $C \subseteq A$. Every sequential and $\mathcal{I}$-sequential space is of countable tightness \cite{ZLS19}.
\begin{proposition}
	Every  $\mathcal{I}^{\mathcal{K}}$-sequential space $X$ is of countable tightness. 
\end{proposition}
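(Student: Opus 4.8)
The plan is to mimic the classical argument that sequential spaces have countable tightness, replacing ordinary sequential closure by $\mathcal{I}^{\mathcal{K}}$-sequential closure and exploiting the defining property of $\mathcal{I}^{\mathcal{K}}$-sequential spaces. Fix $A \subseteq X$ and set
\[
B = \bigcup \{\, \overline{C} : C \subseteq A,\ C \text{ countable} \,\}.
\]
Since every singleton is countable we have $A \subseteq B$, and since each $\overline{C} \subseteq \overline{A}$ we have $B \subseteq \overline{A}$. Thus it suffices to prove that $B$ is closed: then $\overline{A} \subseteq \overline{B} = B$, so $\overline{A} = B$, and every point of $\overline{A}$ lies in $\overline{C}$ for some countable $C \subseteq A$, which is exactly countable tightness.

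Because $X$ is $\mathcal{I}^{\mathcal{K}}$-sequential, to show $B$ is closed it is enough to show $B$ is $\mathcal{I}^{\mathcal{K}}$-closed, i.e.\ that every $\mathcal{I}^{\mathcal{K}}$-limit of a sequence lying in $B$ again belongs to $B$. So I would take a sequence $\{x_n\} \subseteq B$ with $x_n \rightarrow_{\mathcal{I}^{\mathcal{K}}} x$ and aim to conclude $x \in B$. For each $n$ choose a countable $C_n \subseteq A$ with $x_n \in \overline{C_n}$, and put $C = \bigcup_{n \in \omega} C_n$. Being a countable union of countable sets, $C$ is a countable subset of $A$, and $x_n \in \overline{C_n} \subseteq \overline{C}$ for every $n$; hence the whole sequence $\{x_n\}$ lies in the closed set $\overline{C}$.

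The crucial step is then to transfer the $\mathcal{I}^{\mathcal{K}}$-limit into $\overline{C}$. Since $\overline{C}$ is closed, Remark \ref{R14}(1) (equivalently Theorem \ref{TOIK}) guarantees that $\overline{C}$ is $\mathcal{I}^{\mathcal{K}}$-closed; by definition this means no sequence contained in $\overline{C}$ can have an $\mathcal{I}^{\mathcal{K}}$-limit outside $\overline{C}$. As $\{x_n\} \subseteq \overline{C}$ and $x_n \rightarrow_{\mathcal{I}^{\mathcal{K}}} x$, we obtain $x \in \overline{C} \subseteq B$, completing the verification that $B$ is $\mathcal{I}^{\mathcal{K}}$-closed, and therefore closed.

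I expect the main subtlety to be the reduction itself, namely recognizing that countable tightness is equivalent to the closedness of the operator $A \mapsto B$, and that the only places where the $\mathcal{I}^{\mathcal{K}}$-structure enters are through two facts already available: that closed sets are $\mathcal{I}^{\mathcal{K}}$-closed (so the limit is forced to stay inside $\overline{C}$), and that $\mathcal{I}^{\mathcal{K}}$-closed sets are genuinely closed in an $\mathcal{I}^{\mathcal{K}}$-sequential space. The stability of countability under the union $\bigcup_{n} C_n$ is precisely what lets a single countable witness $C$ absorb the entire sequence, so that no finer analysis of the ideals $\mathcal{I}$ and $\mathcal{K}$ is needed.
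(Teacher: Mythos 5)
Your proof is correct and follows essentially the same route as the paper: both form $[A]_\omega=\bigcup\{\overline{C}:C\subseteq A,\ C\ \text{countable}\}$, show this set is $\mathcal{I}^{\mathcal{K}}$-closed, and invoke $\mathcal{I}^{\mathcal{K}}$-sequentiality of $X$ to conclude it is closed and hence equals $\overline{A}$. You are in fact slightly more careful than the paper at the one nontrivial step: you explicitly replace the individual countable witnesses $C_n$ by their countable union $C=\bigcup_{n}C_n$ so that a single $\overline{C}$ contains the whole sequence, and you correctly source the fact that $\overline{C}$ is $\mathcal{I}^{\mathcal{K}}$-closed from its being closed (Remark \ref{R14}) rather than from the $\mathcal{I}^{\mathcal{K}}$-sequentiality of $X$, which is needed only to pass from $\mathcal{I}^{\mathcal{K}}$-closed to closed.
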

\begin{proof}
	Let $X$ be an $\mathcal{I}^{\mathcal{K}}$-sequential space and $A \subseteq X$. Consider $[A]_\omega  = \bigcup \{ \overline{B} : B$ is a countable subset of $A \} $. Clearly, $A \subseteq [A]_\omega \subseteq \overline{A}$. We claim that, $[A]_\omega$ is $\mathcal{I}^{\mathcal{K}}$-closed in $X$. Consider $\{x_n\}$ be a sequence in $[A]_\omega$, $\mathcal{I}^\mathcal{K}$-convergent to $x \in X$. Since $x_n \in [A]_\omega$, then we can find a countable subset $B$ of $A$ such that $x_n \in \overline{B}$ for all $n \in \omega$. Since $X$ be an $\mathcal{I}^{\mathcal{K}}$-sequential space, so $\overline{B}$ is $\mathcal{I}^{\mathcal{K}}$-closed, thus $x \in \overline{B} \subseteq [A]_\omega$, and further $[A]_\omega$ is $\mathcal{I}^{\mathcal{K}}$-closed in $X$.
	
	Now, let $X$ be an $\mathcal{I}^{\mathcal{K}}$-sequential space and $A$ be a subset of $X$. Since the set $[A]_\omega$ is closed in $X$, and $[A]_\omega \subseteq \overline{A} \subseteq \overline{[A]_\omega}$, thus $\overline{A} = [A]_\omega$. If $x \in \overline{A}$, then $x \in [A]_\omega$, and further, there exists a countable subset $C$ of $A$ such that $x \in \overline{C}$, i.e., $X$ is of countable tightness.
\end{proof}
Now we show that every $\mathcal{I}^{\mathcal{K}}$-sequential space is hereditary with respect to $\mathcal{I}^{\mathcal{K}}$-open ($\mathcal{I}^{\mathcal{K}}$-closed) subspaces. First we have the following lemma.

\begin{lemma} {\rm{\cite[Lemma 2.4]{ZLS19}}} \label{Ieqv}
	Let $\mathcal{I}$ be an ideal on $\omega$ and $x_n$, $y_n$ be two sequences in a topological space $X$ such that $\{n \in \omega : x_n \neq y_n \} \in \mathcal{I}$. Then $x_n \rightarrow_\mathcal{I} x$ if and only if $y _n \rightarrow_\mathcal{I} x$.
\end{lemma}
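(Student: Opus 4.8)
The plan is to exploit the symmetry of the hypothesis together with the two defining closure properties of an ideal, namely closure under finite unions and under subsets. Writing $D := \{n \in \omega : x_n \neq y_n\}$ for the exceptional set, note that $D = \{n \in \omega : y_n \neq x_n\}$, so the two sequences play interchangeable roles. Consequently it suffices to establish a single implication, say $x_n \rightarrow_{\mathcal{I}} x \implies y_n \rightarrow_{\mathcal{I}} x$; the reverse direction then follows verbatim after swapping the names of the two sequences.

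First I would fix an arbitrary neighborhood $U$ of $x$ and aim to show $\{n : y_n \notin U\} \in \mathcal{I}$, since this is precisely what $\mathcal{I}$-convergence of $y_n$ to $x$ demands. The central observation is the set inclusion
\[
\{n : y_n \notin U\} \subseteq \{n : x_n \notin U\} \cup D.
\]
To verify it, take any index $n$ with $y_n \notin U$: if $n \in D$ it lies in the right-hand side trivially, while if $n \notin D$ then $x_n = y_n \notin U$, so $n$ belongs to $\{m : x_m \notin U\}$. Thus every index contributing to the left-hand set is accounted for on the right.

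Now I would invoke the hypothesis $x_n \rightarrow_{\mathcal{I}} x$, which yields $\{n : x_n \notin U\} \in \mathcal{I}$, together with the assumption $D \in \mathcal{I}$. Since $\mathcal{I}$ is closed under finite unions, the right-hand side of the displayed inclusion belongs to $\mathcal{I}$; since $\mathcal{I}$ is closed under subsets, the smaller set $\{n : y_n \notin U\}$ lies in $\mathcal{I}$ as well. As $U$ was an arbitrary neighborhood of $x$, this establishes $y_n \rightarrow_{\mathcal{I}} x$.

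There is essentially no hard part to isolate: the argument reduces entirely to the elementary inclusion above and the two ideal axioms. The only point that warrants attention is ensuring the statement is genuinely two-sided, and this is guaranteed by the symmetry of $D$; once the first implication is in hand, the converse requires no fresh work.
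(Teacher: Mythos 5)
Your proof is correct, and it is the standard argument: the paper itself states this lemma with only a citation to \cite[Lemma~2.4]{ZLS19}, where the proof proceeds exactly as you do, via the inclusion $\{n : y_n \notin U\} \subseteq \{n : x_n \notin U\} \cup \{n : x_n \neq y_n\}$ and the two ideal axioms, with symmetry handling the converse. Nothing is missing.
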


\begin{theorem}
	If $X$ is an $\mathcal{I}^{\mathcal{K}}$-sequential space then every $\mathcal{I}^{\mathcal{K}}$-open ($\mathcal{I}^{\mathcal{K}}$-closed) subspaces of $X$ is $\mathcal{I}^{\mathcal{K}}$-sequential.
\end{theorem}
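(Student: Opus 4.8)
The plan is to treat the two cases ($\mathcal{I}^{\mathcal{K}}$-open subspace and $\mathcal{I}^{\mathcal{K}}$-closed subspace) in parallel, reducing each to an application of the hypothesis that $X$ is $\mathcal{I}^{\mathcal{K}}$-sequential. First I would record a preliminary reduction: if $Y$ is an $\mathcal{I}^{\mathcal{K}}$-open subspace then, since $X$ is $\mathcal{I}^{\mathcal{K}}$-sequential, $Y$ is genuinely open in $X$; dually, if $Y$ is $\mathcal{I}^{\mathcal{K}}$-closed then $X \setminus Y$ is $\mathcal{I}^{\mathcal{K}}$-open and hence open, so $Y$ is closed in $X$. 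This promotion from ``$\mathcal{I}^{\mathcal{K}}$-open/closed'' to ``open/closed'' is exactly what makes the subspace topology tractable.

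To show $Y$ is $\mathcal{I}^{\mathcal{K}}$-sequential I would take a subset $U$ that is $\mathcal{I}^{\mathcal{K}}$-open in the subspace $Y$ and prove it is open in $Y$. In the open-subspace case I claim $U$ is $\mathcal{I}^{\mathcal{K}}$-open in $X$; once this is shown, $X$ being $\mathcal{I}^{\mathcal{K}}$-sequential makes $U$ open in $X$, and since $Y$ is open in $X$, $U$ is open in $Y$. In the closed-subspace case I would instead work with $F = Y \setminus U$, show it is $\mathcal{I}^{\mathcal{K}}$-closed in $X$, conclude it is closed in $X$ hence closed in $Y$ (as $Y$ is closed in $X$), so that $U$ is open in $Y$. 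Assuming $U \neq Y$ (the case $U = Y$ being trivial), I would fix a point $p \in Y \setminus U$ for use below.

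The technical heart is transferring $\mathcal{I}^{\mathcal{K}}$-convergence between $Y$ and $X$. The closed case is the easy direction: if $\{x_n\} \subseteq F \subseteq Y$ and $x_n \to_{\mathcal{I}^{\mathcal{K}}} x$ in $X$, then $x \in Y$ because $X \setminus Y$ is open, hence $\mathcal{I}^{\mathcal{K}}$-open by Theorem \ref{TOIK}; moreover, since every neighborhood of $x$ in $Y$ is the trace of a neighborhood in $X$ and all terms already lie in $Y$, the same witnessing set $M \in \mathcal{I}^*$ shows $x_n \to_{\mathcal{I}^{\mathcal{K}}} x$ in $Y$, whence $x \in F$ because $F$ is $\mathcal{I}^{\mathcal{K}}$-closed in $Y$. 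The open case carries the main obstacle and runs in the opposite direction. Suppose, for contradiction, that some sequence $\{x_n\} \subseteq X \setminus U$ has $x_n \to_{\mathcal{I}^{\mathcal{K}}} x$ with $x \in U \subseteq Y$. Unwinding the definition gives $M \in \mathcal{I}^*$ with the associated sequence $\mathcal{K}$-convergent to $x$; since $Y$ is an open neighborhood of $x$, the set $A = \{n \in M : x_n \notin Y\}$ lies in $\mathcal{K}$. I would then build a new sequence $z_n$ lying entirely in $Y \setminus U$ by setting $z_n = x_n$ for $n \in M \setminus A$ (these terms are in $Y \setminus U$) and $z_n = p$ otherwise. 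The crucial verification is that $z_n \to_{\mathcal{I}^{\mathcal{K}}} x$ in $Y$: using the \emph{same} set $M$, for any neighborhood $V$ of $x$ the bad-index set splits as a subset of $\{n \in M : x_n \notin V\}$ (which is in $\mathcal{K}$ by the original $\mathcal{K}$-convergence) together with a subset of $A \in \mathcal{K}$, so it lies in $\mathcal{K}$. This produces a sequence in $Y \setminus U$ with $\mathcal{I}^{\mathcal{K}}$-limit $x \in U$ inside $Y$, contradicting that $U$ is $\mathcal{I}^{\mathcal{K}}$-open in $Y$; hence $x \notin U$ and $U$ is $\mathcal{I}^{\mathcal{K}}$-open in $X$.

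The step I expect to be the real obstacle is precisely this manufacturing of $\{z_n\}$: one must modify the original sequence simultaneously on the $\mathcal{K}$-small set $A$ (where it escapes $Y$) and on the $\mathcal{I}$-small complement of $M$, and check that neither modification destroys the $\mathcal{K}$-convergence of the associated sequence. Notably this argument does not require $\mathcal{I} \cup \mathcal{K}$ to be an ideal, since the two small sets are handled through the single witness $M \in \mathcal{I}^*$ rather than combined. Lemma \ref{Ieqv} supplies the right intuition (altering a sequence on an ideal-small set preserves limits), and I would either invoke an $\mathcal{I}^{\mathcal{K}}$-analogue of it or, as sketched, verify the convergence of the modified sequence directly.
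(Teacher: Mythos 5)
Your proof is correct and takes essentially the same route as the paper's: the same promotion of $\mathcal{I}^{\mathcal{K}}$-open (resp.\ $\mathcal{I}^{\mathcal{K}}$-closed) subspaces to open (resp.\ closed) ones, the same reduction to showing $U$ (resp.\ $F = Y \setminus U$) is $\mathcal{I}^{\mathcal{K}}$-open (resp.\ $\mathcal{I}^{\mathcal{K}}$-closed) in $X$, and the same key construction that replaces the terms escaping $Y$ (a $\mathcal{K}$-small index set, thanks to the open neighborhood $Y$ of the limit) by a fixed point $p \in Y \setminus U$ while reusing the witness $M \in \mathcal{I}^*$. The only cosmetic differences are that you verify the $\mathcal{K}$-convergence of the modified sequence directly where the paper invokes Lemma \ref{Ieqv}, and that you treat the degenerate case $U = Y$ explicitly, which the paper glosses over with its assumption that $Y \setminus U$ is nonempty.
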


\begin{proof}
	Let $X$ be an $\mathcal{I}^{\mathcal{K}}$-sequential space. Suppose that $Y$ is an $\mathcal{I}^{\mathcal{K}}$-open subset of $X$. Then $Y$ is also open in $X$. We anticipate $Y$ to be $\mathcal{I}^{\mathcal{K}}$-sequential space.\\
	Consider $U$ to be $\mathcal{I}^{\mathcal{K}}$-open in $Y$. Here $Y$ is open, so we claim that $U$ is open in $X$. Since $X$ is $\mathcal{I}^{\mathcal{K}}$-sequential space, we need to show that $U$ is $\mathcal{I}^{\mathcal{K}}$-open in $X$. Contra-positively, take $U$ be not $\mathcal{I}^{\mathcal{K}}$-open in $X$. Then, $\exists \{x_n\}$ in $X \setminus U$ such that $x_n \rightarrow_{\mathcal{I}^{\mathcal{K}}} x$ $(\in U.)$ i.e. $\exists M \in {\mathcal{I}}^*$ such that $x_{n_k} \rightarrow_{\mathcal{K}} x$, where $n_k \in M$ and $x_{n_k} \in X \setminus U$. Now $\{n_k : x_{n_k} \notin Y\} \in \mathcal{K}$. For a point $y \in Y\setminus U$ (assume), Now Consider a sequence $\{y_n\}$ such that $y_n= x_n$ for $n \in M$ and $y_n= y_{n_k}$ for $n \notin M$ where $\{y_{n_k}\}$ is defined as $y_{n_k}= x_{n_k}$ for $x_{n_k} \in Y$ and $y_{n_k}= y$ for $x_{n_k} \notin Y$. Then by Lemma \ref{Ieqv}, $\{y_{n_k}\}$ is $\mathcal{K}$-convergent to $x$. Hence $\{y_n\}$ is $\mathcal{I}^\mathcal{K}$-convergent to $x$. So $U$ is not $\mathcal{I}^{\mathcal{K}}$-open in $Y$. That is a contradiction to our assumption.
	
	Let $Y$ be an $\mathcal{I}^{\mathcal{K}}$-closed subset of $X$. Then $Y$ is closed in $X$. For any $\mathcal{I}^{\mathcal{K}}$-closed subset $F$ of $Y$, it is sufficient to show that $F$ is closed in $X$.  Since $X$ is an $\mathcal{I}^{\mathcal{K}}$-sequential space, it is enough to show that $F$ is $\mathcal{I}^{\mathcal{K}}$-closed in $X$. Therefore, let $\{x_n: n \in \omega\}$ be an arbitrary sequence in $F$ with $x_n \rightarrow_{\mathcal{I}^ \mathcal{K}} x$ in $X$. We claim that $x \in F$. Indeed, since $Y$ is closed, we have $x \in Y$, and then it is also clear that $x \in F$ since $F$ is an $\mathcal{I}^{\mathcal{K}}$-closed subset of $Y$.
\end{proof}

\begin{proposition}
	The disjoint topological sum of any family of $\mathcal{I}^{\mathcal{K}}$-sequential spaces is $\mathcal{I}^{\mathcal{K}}$-sequential.
\end{proposition}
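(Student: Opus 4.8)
The plan is to argue through the defining property of $\mathcal{I}^\mathcal{K}$-sequentiality recorded in Remark \ref{R14}(3): I will take an arbitrary $\mathcal{I}^\mathcal{K}$-open subset $O$ of the disjoint sum $X = \bigsqcup_{\alpha \in \Lambda} X_\alpha$ and show that $O$ is open. Since the coproduct topology declares $O$ open precisely when each trace $O \cap X_\alpha$ is open in $X_\alpha$, it suffices to prove that $O \cap X_\alpha$ is open in $X_\alpha$ for every index $\alpha$. The two structural facts I will lean on are that each summand $X_\alpha$ is clopen in $X$ and that the subspace topology it inherits agrees with its own topology; these are exactly what make the disjoint sum behave well with respect to convergence.

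The key preliminary step will be a locality lemma: for a sequence $\{x_n\}$ lying entirely in a fixed summand $X_\alpha$ and a point $x \in X_\alpha$, the sequence is $\mathcal{I}^\mathcal{K}$-convergent to $x$ in $X_\alpha$ if and only if it is $\mathcal{I}^\mathcal{K}$-convergent to $x$ in $X$. To establish the nontrivial direction I would unwind the definition: fix the witness $M \in \mathcal{I}^*$ and the associated function $g$ (equal to $x_n$ on $M$ and to $x$ off $M$) that is $\mathcal{K}$-convergent to $x$ in $X_\alpha$. Because every value of $g$ lies in $X_\alpha$, for any neighborhood $W$ of $x$ in $X$ one has $g(n) \notin W \iff g(n) \notin W \cap X_\alpha$, so the $\mathcal{K}$-smallness of $\{n : g(n) \notin W \cap X_\alpha\}$ transfers verbatim to $\{n : g(n) \notin W\}$. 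Thus the same $M$ witnesses $\mathcal{I}^\mathcal{K}$-convergence in $X$.

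With this lemma in hand the main argument is short. Suppose, for contradiction, that $O \cap X_\alpha$ fails to be $\mathcal{I}^\mathcal{K}$-open in $X_\alpha$; then there is a sequence $\{x_n\}$ in $X_\alpha \setminus O$ that is $\mathcal{I}^\mathcal{K}$-convergent in $X_\alpha$ to some $x \in O \cap X_\alpha$. By the locality lemma this sequence is $\mathcal{I}^\mathcal{K}$-convergent to $x$ in $X$ as well, yet $\{x_n\} \subseteq X \setminus O$ while $x \in O$, contradicting the assumption that $O$ is $\mathcal{I}^\mathcal{K}$-open in $X$. Hence $O \cap X_\alpha$ is $\mathcal{I}^\mathcal{K}$-open in $X_\alpha$, and since $X_\alpha$ is $\mathcal{I}^\mathcal{K}$-sequential it is open in $X_\alpha$; as $\alpha$ was arbitrary, $O$ is open. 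I expect the only genuinely delicate point to be the locality lemma, specifically the bookkeeping that the modified function $g$ never leaves $X_\alpha$, which is what lets the neighborhood conditions in $X$ and in $X_\alpha$ coincide; the remaining reductions are routine consequences of the coproduct topology.
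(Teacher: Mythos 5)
Your proof is correct, and it follows the same basic reduction as the paper --- pass to the traces on the summands, invoke the $\mathcal{I}^{\mathcal{K}}$-sequentiality of each $X_\alpha$, and finish with the defining property of the coproduct topology --- but you execute it in dual form and, more importantly, you supply a step the paper leaves implicit. The paper argues with closed sets: it takes $F$ $\mathcal{I}^{\mathcal{K}}$-closed in $X$, uses Remark \ref{R14} to get that each $F \cap X_\alpha$ is $\mathcal{I}^{\mathcal{K}}$-closed in $X$, and then asserts ``as $F \cap X_\alpha \subseteq X_\alpha$, i.e.\ $F \cap X_\alpha$ is closed in $X_\alpha$'' --- a jump that silently converts $\mathcal{I}^{\mathcal{K}}$-closedness \emph{in the ambient space} $X$ into $\mathcal{I}^{\mathcal{K}}$-closedness \emph{in the subspace} $X_\alpha$ before applying the sequentiality of $X_\alpha$. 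That conversion is exactly your locality lemma: a sequence in $X_\alpha$ that is $\mathcal{I}^{\mathcal{K}}$-convergent in $X_\alpha$ is $\mathcal{I}^{\mathcal{K}}$-convergent in $X$, which you verify by noting that the witnessing function $g$ never leaves $X_\alpha$, so for any neighborhood $W$ of $x$ in $X$ the sets $\{n : g(n) \notin W\}$ and $\{n : g(n) \notin W \cap X_\alpha\}$ coincide and the same $M \in \mathcal{I}^*$ works. So where the paper leans on Remark \ref{R14} and an unproved restriction principle, you argue directly from the definition of $\mathcal{I}^{\mathcal{K}}$-open by contradiction; your write-up is the more complete of the two, at the cost of the extra lemma, and the open-set versus closed-set formulations are otherwise interchangeable via complements.
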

\begin{proof}
	Let $(X_{\alpha})_{\alpha \in \Delta}$ be a family of $\mathcal{I}^\mathcal{K}$-sequential space and $X= {\oplus}_{\alpha \in \Delta} X_{\alpha}$. We claim that $X$ is $\mathcal{I}^\mathcal{K}$-sequential space. Let $F$ be $\mathcal{I}^\mathcal{K}$-closed in $X$. For each $\alpha \in \Delta$, $X_{\alpha}$ is closed in $X$ i.e., $X_{\alpha}$ is $\mathcal{I}^\mathcal{K}$-closed in $X$. Hence, $F \cap X_{\alpha}$ is $\mathcal{I}^\mathcal{K}$-closed in $X$ by Remark \ref{R14}. As $(F \cap X_{\alpha}) \subseteq X_{\alpha}$ i.e. $F \cap X_{\alpha}$ is closed in $X_{\alpha}$. Now $F$ is closed in $X \equiv X\setminus F$ is open in $X \equiv {\cup}_{\alpha}(X_{\alpha} \setminus F)$ is open in $X$ if and only if $X_{\alpha} \setminus F$ is open in $X_{\alpha} \equiv F \cap X_{\alpha}$ is closed in $X_\alpha$. Hence $F$ is closed in $X$.
\end{proof}

\section{$\mathcal{I}^{\mathcal{K}}$-cluster point and $\mathcal{I}^{\mathcal{K}}$-limit point}
The notions $\mathcal{I}$-cluster point and $\mathcal{I}$-limit point in a topological space $X$ were defined by Das et al. \cite{LD05} and also characterized $C_x(\mathcal{I})$, the collection of all $\mathcal{I}$-cluster points of a given sequence $x= \{x_n\}$ in $X$, as closed subsets of $X$ (Theorem 10, \cite{LD05}). Here we define $\mathcal{I}^\mathcal{K}$-notions of cluster point and limit points for a function in $X$. 

For $\mathcal{I}^*$-convergence, $\mathcal{I} \cup Fin$ is an ideal, thereupon $\mathcal{I}$ and $Fin$ satisfy ideality condition. Moreover we assume ideality condition of $\mathcal{I}$ and $\mathcal{K}$ in $\mathcal{I}^{\mathcal{K}}$-convergence to investigate some results. 

\begin{definition}\label{Lb}
	Let $f: S \rightarrow X$ be a function and $\mathcal{I}$, $\mathcal{K}$ be two ideals on $S$. Then $x \in X$ is called an $\mathcal{I}^{\mathcal{K}}$-cluster point of $f$ if there exists $M \in \mathcal{I}^*$ such that the function $g: S \rightarrow X$ defined by 
	\begin{center}
		g(s)= $\begin{cases}
		f(s) ,& \mbox{$s \in M$}\\
		x ,& \mbox{$s \notin M$}	
		\end{cases}$
	\end{center}
	has a $\mathcal{K}$-cluster point $x$, i.e., $\{ s \in S : g(s) \in U_x \} \notin \mathcal{K}$.
\end{definition}

\begin{definition}\label{La}
	Let $f: S \rightarrow X$ be a function and $\mathcal{I}$, $\mathcal{K}$ be two ideals on $S$. Then $x \in X$ is called an $\mathcal{I}^{\mathcal{K}}$-limit point of $f$ if there exists $M \in \mathcal{I}^*$ such that for the function $g: S \rightarrow X$ defined by 
	\begin{center}
		g(s)= $\begin{cases}
		f(s) ,& \mbox{$s \in M$}\\
		x ,& \mbox{$s \notin M$}	
		\end{cases}$
	\end{center}
	has a ${\mathcal{K}}$-limit point $x$.
\end{definition}
\noindent
For $\mathcal{I} = \mathcal{K}$, we know the convergence modes $\mathcal{I}^\mathcal{K} \equiv \mathcal{I} \equiv \mathcal{K}$. Hence definitions \ref{Lb} and \ref{La} generalizes the definitions of $\mathcal{I}$ or $\mathcal{K}$-(limit point and cluster point) correspondingly. Again, for nets in a topological space $\mathcal{I}$-limit points and $\mathcal{I}$-cluster points coincide \cite{LD07}. Therefore, $\mathcal{I}^{\mathcal{K}}$-cluster points and $\mathcal{I}^{\mathcal{K}}$-limit points of nets also coincide.

Following the notation in \cite{KSW01}, we denote the collection of all $\mathcal{I}^\mathcal{K}$-limit points and $\mathcal{I}^\mathcal{K}$-cluster points of a function $f$ in a topological space $X$ by $L_f(\mathcal{I}^\mathcal{K})$ and  $C_f(\mathcal{I}^\mathcal{K})$ respectively. We observe that $C_f(\mathcal{I}^\mathcal{K}) \subseteq  C_f(\mathcal{K})$ and $L_f(\mathcal{I}^\mathcal{K}) \subseteq L_f(\mathcal{K})$. We also observe that $L_f(\mathcal{I}^*) = L(\mathcal{I}^*)$, where $L(\mathcal{I}^*)$ denote the collection of $\mathcal{I}^*$-limits of $f$.
\begin{lemma}
	If $\mathcal{I}$ and $\mathcal{K}$ be two ideal then $L_f(\mathcal{I}^\mathcal{K}) \subseteq C_f(\mathcal{I}^\mathcal{K})$.
\end{lemma}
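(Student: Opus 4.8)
The plan is to reduce the statement to the classical fact that a $\mathcal{K}$-limit point of a single function is automatically a $\mathcal{K}$-cluster point of that same function, exploiting that both Definition \ref{La} and Definition \ref{Lb} are witnessed through the \emph{same} auxiliary function $g$ attached to a set $M \in \mathcal{I}^*$. So the $\mathcal{I}^{\mathcal{K}}$-layer will be carried along unchanged, and the only real content is an argument about $\mathcal{K}$ alone.

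First I would take $x \in L_f(\mathcal{I}^\mathcal{K})$. By Definition \ref{La} there exists $M \in \mathcal{I}^*$ such that the associated function $g$ (equal to $f$ on $M$ and to $x$ off $M$) has $x$ as a $\mathcal{K}$-limit point; that is, there is a set $A = \{n_1 < n_2 < \cdots\} \notin \mathcal{K}$ with $g(n_k) \to x$ in $X$. I would keep this very $M$ and show that $x$ is a $\mathcal{K}$-cluster point of the \emph{same} $g$, which is exactly what Definition \ref{Lb} demands in order to conclude $x \in C_f(\mathcal{I}^\mathcal{K})$. Concretely, fix an arbitrary neighborhood $U_x$ of $x$. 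Since $g(n_k) \to x$, only finitely many indices $k$ fail $g(n_k) \in U_x$, so $\{n_k : g(n_k) \in U_x\}$ equals $A \setminus F$ for some finite $F$. The standing hypothesis $Fin \subseteq \mathcal{K}$ gives $F \in \mathcal{K}$; if $A \setminus F \in \mathcal{K}$ then $A = (A \setminus F) \cup F \in \mathcal{K}$, contradicting $A \notin \mathcal{K}$. Hence $\{s \in S : g(s) \in U_x\} \supseteq A \setminus F \notin \mathcal{K}$, whence $\{s \in S : g(s) \in U_x\} \notin \mathcal{K}$. As $U_x$ was arbitrary, $x$ is a $\mathcal{K}$-cluster point of $g$ with the witnessing set $M$, and therefore $x \in C_f(\mathcal{I}^\mathcal{K})$.

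The argument is short, and the point to watch is precisely that the \emph{same} $M \in \mathcal{I}^*$ serves both definitions, so no fresh selection from $\mathcal{I}^*$ is required; after that the entire weight rests on the passage from $\mathcal{K}$-limit point to $\mathcal{K}$-cluster point for $g$, whose only genuine ingredient is $Fin \subseteq \mathcal{K}$. I expect the one mildly delicate step to be purely definitional: confirming that the $\mathcal{K}$-limit-point notion inherited from \cite{LD05} is the subsequential one along a set $A \notin \mathcal{K}$, so that deleting a finite set of indices leaves the index set outside $\mathcal{K}$. Notably, unlike Lemma \ref{lem}, this proof does not invoke the ideality condition on $\mathcal{I}$ and $\mathcal{K}$, nor even that $\mathcal{I} \cup \mathcal{K}$ is an ideal.
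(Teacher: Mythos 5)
Your proof is correct and takes essentially the same route as the paper: the paper's one-line proof simply invokes the classical inclusion $L_g(\mathcal{K}) \subseteq C_g(\mathcal{K})$ for the auxiliary function $g$ attached to a single witness $M \in \mathcal{I}^*$, which is exactly your reduction. You merely unpack the proof of that classical fact (removing a finite set of indices, using the standing assumption $Fin \subseteq \mathcal{K}$), and your remark that no ideality condition on $\mathcal{I} \cup \mathcal{K}$ is needed is consistent with the paper, which imposes none for this lemma.
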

\begin{proof}
	Since $L_f(\mathcal{K}) \subseteq C_f(\mathcal{K})$ for an ideal $\mathcal{K}$, hence the result is immediate.
\end{proof}
We have the following lemma provided the ideals $\mathcal{I}$ and $\mathcal{K}$ satisfy ideality condition.
\begin{lemma}\label{lem}	
	$C_f(\mathcal{I} \cup \mathcal{K}) \subseteq C_f(\mathcal{I}^\mathcal{K})$.
\end{lemma}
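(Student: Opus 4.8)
The plan is to unwind both cluster-point definitions and reduce the inclusion to the elementary fact that $\mathcal{K}\subseteq\mathcal{I}\cup\mathcal{K}$. Fix $x\in C_f(\mathcal{I}\cup\mathcal{K})$. By the meaning of a cluster point with respect to the ideal $\mathcal{I}\cup\mathcal{K}$ (which is a genuine proper ideal precisely because $\mathcal{I}$ and $\mathcal{K}$ satisfy the ideality condition), for every neighbourhood $U_x$ of $x$ we have $\{s\in S: f(s)\in U_x\}\notin\mathcal{I}\cup\mathcal{K}$. To place $x$ in $C_f(\mathcal{I}^{\mathcal{K}})$, I must exhibit a set $M\in\mathcal{I}^*$ for which the associated function $g$ of Definition \ref{Lb} has $x$ as a $\mathcal{K}$-cluster point.

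The cleanest choice is $M=S$, which lies in $\mathcal{I}^*$ since $S^{\complement}=\emptyset\in\mathcal{I}$; for this choice $g\equiv f$. Then for each neighbourhood $U_x$ of $x$ the set controlling the $\mathcal{K}$-cluster condition is exactly $\{s: f(s)\in U_x\}$, and since this set is not in $\mathcal{I}\cup\mathcal{K}$ while $\mathcal{K}\subseteq\mathcal{I}\cup\mathcal{K}$, it is in particular not in $\mathcal{K}$. Hence $x$ is a $\mathcal{K}$-cluster point of $g$, and therefore $x\in C_f(\mathcal{I}^{\mathcal{K}})$, which is the desired inclusion.

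I would also record the more robust version of the computation, as it pinpoints where the ideality hypothesis enters. For an arbitrary witness $M\in\mathcal{I}^*$ one has $\{s:g(s)\in U_x\}=(\{s:f(s)\in U_x\}\cap M)\cup M^{\complement}$, since $x\in U_x$ forces every $s\notin M$ into the set. Were this set to lie in $\mathcal{K}$, then $\{s:f(s)\in U_x\}\cap M\in\mathcal{K}$ while $M^{\complement}\in\mathcal{I}$, so that $\{s:f(s)\in U_x\}=(\{s:f(s)\in U_x\}\cap M)\cup(\{s:f(s)\in U_x\}\cap M^{\complement})$ would belong to $\mathcal{I}\cup\mathcal{K}$, contradicting the hypothesis. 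This is exactly the step that uses $\mathcal{I}\cup\mathcal{K}$ being an ideal, i.e. the ideality condition, since it guarantees that the union of a $\mathcal{K}$-set and an $\mathcal{I}$-set stays inside $\mathcal{I}\cup\mathcal{K}$. The only point needing care is this decomposition of the preimage set for $g$ and the bookkeeping that adjoining $x$ off $M$ cannot drop the good set into $\mathcal{K}$; there is no genuine obstacle here, and the choice $M=S$ collapses the entire verification to the inclusion $\mathcal{K}\subseteq\mathcal{I}\cup\mathcal{K}$.
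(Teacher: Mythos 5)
Your proof is correct, and at bottom it rests on the same computation as the paper's: both reduce the lemma to the containment $\mathcal{K}\subseteq\mathcal{I}\cup\mathcal{K}$ together with a comparison of the sets $\{s: f(s)\in U_x\}$ and $\{s: g(s)\in U_x\}$. The difference is in packaging. The paper argues contrapositively: if $y$ is not an $\mathcal{I}^{\mathcal{K}}$-cluster point, then for every $M\in\mathcal{I}^*$ the $g$-preimage of some neighborhood lies in $\mathcal{K}$, hence so does the $f$-preimage, hence it lies in $\mathcal{I}\cup\mathcal{K}$. You argue directly and make the witness explicit by taking $M=S\in\mathcal{I}^*$, which collapses $g$ to $f$ and turns the whole verification into the single observation $\{s:f(s)\in U_x\}\notin\mathcal{I}\cup\mathcal{K}\supseteq\mathcal{K}$. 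Your route buys two things the paper's formulation obscures: first, it exhibits the factorization $C_f(\mathcal{I}\cup\mathcal{K})\subseteq C_f(\mathcal{K})\subseteq C_f(\mathcal{I}^{\mathcal{K}})$, making clear that the ideality condition is not needed for the inclusion itself (it serves only to ensure $\mathcal{I}\cup\mathcal{K}$ is a proper ideal, so that the left-hand notion is nondegenerate); second, your ``robust version'' proves the strictly stronger fact that \emph{every} $M\in\mathcal{I}^*$ witnesses $x$ as a $\mathcal{K}$-cluster point of the associated $g$, via the decomposition $\{s:f(s)\in U_x\}\subseteq(\{s:f(s)\in U_x\}\cap M)\cup M^{\complement}$, and correctly isolates that this stronger statement, not the basic inclusion, is where the ideality condition genuinely enters. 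Incidentally, the direct argument with a fixed witness also sidesteps a minor quantifier sloppiness in the paper's contrapositive, where the neighborhood $U$ implicitly depends on $M$ but is treated as uniform.
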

\begin{proof}
	Let $y$ be not a $\mathcal{I}^\mathcal{K}$-cluster point of $x=\{x_n\}_{n \in \omega}$. Then for all $M \in \mathcal{I}^*$ such that for the function $g : S \rightarrow X$ defined by 
	\begin{center}
		g(s)= $\begin{cases}
		f(s) ,& \mbox{$s \in M$}\\
		x ,& \mbox{$s \notin M$},	
		\end{cases}$
	\end{center}
	the set $\{ s \in S : g(s) \in U_x \} \in \mathcal{K}$. Since $\{s : f(s) \in U_x \} \subseteq \{s : g(s) \in U_x \} \in \mathcal{K}$.\\
	i.e.  $ \{ s : f(s) \in U_x\} \in \mathcal{I} \cup \mathcal{K}$. Hence $y$ is not a $(\mathcal{I} \cup \mathcal{K})$-cluster point of $x$.
\end{proof}
Since above set inequalities signify the implication $ \mathcal{K} \rightarrow \mathcal{I}^{\mathcal{K}} \rightarrow \mathcal{I} \cup \mathcal{K}$, We expect the following conclusion.
\begin{conjecture}
	$L_f(\mathcal{I} \cup \mathcal{K}) \subseteq L_f(\mathcal{I}^\mathcal{K})$.
\end{conjecture}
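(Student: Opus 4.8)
The plan is to mirror the proof of Lemma~\ref{lem}, but since a limit point is an \emph{existential} notion (it asserts the existence of a large set along which $f$ converges) rather than the \emph{universal} neighbourhood condition used for cluster points, I would argue directly by transporting a witnessing set, instead of by the set-inclusion/contrapositive route taken there. Concretely, suppose $y \in L_f(\mathcal{I}\cup\mathcal{K})$. By the definition of an $(\mathcal{I}\cup\mathcal{K})$-limit point there is a set $P \subseteq S$ with $P \notin \mathcal{I}\cup\mathcal{K}$ such that $f$ restricted along $P$ converges to $y$. The goal is to produce a set $M \in \mathcal{I}^*$ for which the modified function $g$ (equal to $f$ on $M$ and to $y$ on $M^\complement$) admits $y$ as a $\mathcal{K}$-limit point, i.e. a set $T \notin \mathcal{K}$ with $g$ restricted along $T$ converging to $y$.

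First I would record the elementary but decisive observation that $\mathcal{K} \subseteq \mathcal{I}\cup\mathcal{K}$, so $P \notin \mathcal{I}\cup\mathcal{K}$ forces $P \notin \mathcal{K}$; thus the very set $P$ that witnesses the $(\mathcal{I}\cup\mathcal{K})$-limit point is already $\mathcal{K}$-positive. Next I would choose $M \in \mathcal{I}^*$ so that $g$ and $f$ agree along the witness: the safe choice is an $M$ with $P \subseteq M$ and $M^\complement \in \mathcal{I}$ (for instance $M = S \in \mathcal{I}^*$), which makes $g|_P = f|_P$. Then setting $T = P$ gives $g|_T = f|_T \to y$ with $T \notin \mathcal{K}$, so $g$ has $y$ as a $\mathcal{K}$-limit point and hence $y \in L_f(\mathcal{I}^\mathcal{K})$. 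The ideality condition on $\mathcal{I}$ and $\mathcal{K}$ would be invoked exactly as in Lemma~\ref{lem} if one insists on a proper $M \subsetneq S$: writing $P = (P\cap M)\cup(P\cap M^\complement)$ with $P\cap M^\complement \subseteq M^\complement \in \mathcal{I}$, one must check that the positivity of $P$ is inherited by $T = P\cap M$, i.e. that $P\cap M \notin \mathcal{K}$.

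The step I expect to be the main obstacle is precisely this last point: ensuring that the witnessing set remains $\mathcal{K}$-positive after it is intersected with the co-$\mathcal{I}$ set $M$, so that it still lies in the region where $g$ coincides with $f$. The danger is the degenerate split in which $P\cap M \in \mathcal{K}$ while the $\mathcal{K}$-positivity of $g$ is carried entirely by $M^\complement$ (where $g$ is the constant $y$) -- a witness that certifies $y$ for $g$ but not for $f$. Ruling this out is where the hypothesis $S \neq I\cup K$, for all $I\in\mathcal{I}$ and $K\in\mathcal{K}$, enters, since it prevents $P$ from being absorbed into $M^\complement$. A secondary technical wrinkle, which I would handle by passing to the net formulations of \cite{LD05, LD07}, is that for a general domain $S$ the phrase ``$f$ converges along $P$'' must be read as convergence of a subnet; the construction of $g$ and the extraction of $T$ then require the net versions of the $\mathcal{I}$-limit-point machinery rather than plain subsequence extraction.
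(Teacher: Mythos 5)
The first thing to say is that the paper contains no proof of this statement to compare against: it is explicitly left as an open conjecture (introduced by ``We expect the following conclusion''), so your proposal stands or falls on its own. And in fact your first, simple argument already settles the conjecture as literally stated, given Definition~\ref{La}. Since every ideal considered here contains $\emptyset$, the whole set $S$ lies in $\mathcal{I}^*$, so $M=S$ is a legitimate witness, for which $g=f$; combined with the trivial remark that $P\notin\mathcal{I}\cup\mathcal{K}$ forces $P\notin\mathcal{K}$, this gives $L_f(\mathcal{I}\cup\mathcal{K})\subseteq L_f(\mathcal{K})\subseteq L_f(\mathcal{I}^{\mathcal{K}})$, the second inclusion being precisely the limit-point analogue of Lemma~2.1 of \cite{MS11} ($\mathcal{K}$-convergence implies $\mathcal{I}^{\mathcal{K}}$-convergence). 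Notably, this route needs no ideality condition at all, and it tracks the paper's own chain $\mathcal{K}\rightarrow\mathcal{I}^{\mathcal{K}}\rightarrow\mathcal{I}\cup\mathcal{K}$ more directly than the contrapositive set-inclusion argument of Lemma~\ref{lem} that you set out to mirror.

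Two corrections to the second half of your proposal, though. First, you misidentify the mechanism behind the ``degenerate split''. If one insists on a proper $M\subsetneq S$, what guarantees $T=P\cap M\notin\mathcal{K}$ is not the condition $S\neq I\cup K$ as such, but the standing Section~4 hypothesis that $\mathcal{I}\cup\mathcal{K}$ is an ideal, hence closed under finite unions: since $P\cap M^{\complement}\subseteq M^{\complement}\in\mathcal{I}$, the assumption $P\cap M\in\mathcal{K}$ would yield $P=(P\cap M)\cup(P\cap M^{\complement})\in\mathcal{I}\cup\mathcal{K}$, contradicting the choice of $P$. Your phrase ``prevents $P$ from being absorbed into $M^{\complement}$'' only rules out $P\subseteq M^{\complement}$, which is strictly weaker than what is needed. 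Second, for $S=\omega$ you should record that $P\cap M$ is infinite (immediate, since $Fin\subseteq\mathcal{K}$ and $P\cap M\notin\mathcal{K}$), so that $f|_{P\cap M}\to y$ is a genuine subsequence limit; for general $S$ the paper never defines ``convergence along a set'', and your suggestion to pass to the net formulations is the right repair, though the statement would then have to be re-verified in that setting. Finally, be aware that the $M=S$ trick cuts both ways: by the same token, any $I\in\mathcal{I}\setminus\mathcal{K}$ makes \emph{every} $y\in X$ an $\mathcal{I}^{\mathcal{K}}$-limit point of \emph{every} $f$ (take $M=I^{\complement}$, so that $g\equiv y$ on the $\mathcal{K}$-positive set $I$), so Definition~\ref{La} is quite permissive and even puts the paper's observation $L_f(\mathcal{I}^{\mathcal{K}})\subseteq L_f(\mathcal{K})$ in doubt when $\mathcal{I}\not\subseteq\mathcal{K}$. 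Your proof is valid for the conjecture as stated; if a stricter definition is intended, the careful second half of your argument, with the union-closure fix above, is the part that survives.
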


\smallskip \noindent
For sequential criteria in \cite{KSW01}, we observe the following result.
\begin{theorem}\label{11}	
	Let $\mathcal{I}$, $\mathcal{K}$ be two ideals on $\omega$ and $X$ be a topological space. Then
	\begin{itemize}
		\item[\textup{(i)}]  For $x=\{x_n\}_{n \in \omega}$, a sequence in $X$; $C_x(\mathcal{I}^\mathcal{K})$ is a closed set.
		\item[\textup{(ii)}] If $(X, \tau)$ is closed hereditary separable and there exists a disjoint sequence of sets $\{P_n\}$ such that $P_n \subset \omega$, $P_n \notin \mathcal{I}, \mathcal{K}$ for all $n$, then for every non empty closed subset $F$ of $X$, there exists a sequence $x$ in $X$ such that $F = C_x(\mathcal{I}^\mathcal{K})$ provided $\mathcal{I} \cup \mathcal{K}$ is an ideal.
	\end{itemize}
\end{theorem}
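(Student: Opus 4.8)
The plan is to mirror the classical characterisation of the cluster set (Theorem 10 of \cite{LD05}), pushing each step through the two-ideal modification of Definition \ref{Lb} and exploiting the fact that an $\mathcal{I}^\mathcal{K}$-cluster point is in particular a $\mathcal{K}$-cluster point, so that $C_x(\mathcal{I}^\mathcal{K})\subseteq C_x(\mathcal{K})$.

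For (i) I would show that $X\setminus C_x(\mathcal{I}^\mathcal{K})$ is open. Fix $y\notin C_x(\mathcal{I}^\mathcal{K})$. Negating Definition \ref{Lb} with the admissible choice $M=\omega\in\mathcal{I}^*$ yields an open neighbourhood $U$ of $y$ whose index set $\{n:x_n\in U\}$ lies in $\mathcal{K}$. Now $U$ is a neighbourhood of every $z\in U$, and for any $M'\in\mathcal{I}^*$ the index set attached to $z$ is contained in $\{n:x_n\in U\}$, hence lies in $\mathcal{K}$ by downward closure; therefore each $z\in U$ also fails the $\mathcal{I}^\mathcal{K}$-cluster condition and $U\subseteq X\setminus C_x(\mathcal{I}^\mathcal{K})$. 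The only point demanding care is that the auxiliary function $g$ of Definition \ref{Lb} is rebuilt around each candidate point, so I must check that passing from $y$ to a nearby $z$ perturbs the witnessing index set only by a set drawn from $\mathcal{I}$, which the ideality condition keeps from producing new cluster behaviour.

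For (ii), closed hereditary separability applied to the closed subspace $F$ produces a countable dense set $D=\{d_1,d_2,\dots\}\subseteq F$. Using the prescribed disjoint family I would define $x$ by $x_k=d_n$ for $k\in P_n$ and $x_k=d_1$ on the remaining indices. To place $D$ inside $C_x(\mathcal{I}^\mathcal{K})$, fix $d_n$ and an arbitrary $M\in\mathcal{I}^*$: since $M^\complement\in\mathcal{I}$ and $P_n\notin\mathcal{I}$ we have $P_n\cap M\notin\mathcal{I}$, and because $\mathcal{I}\cup\mathcal{K}$ is an ideal with $P_n\notin\mathcal{I}\cup\mathcal{K}$ it follows that $P_n\cap M\notin\mathcal{K}$; as $x_k=d_n$ throughout $P_n$, every neighbourhood $U$ of $d_n$ satisfies $\{k\in M:x_k\in U\}\supseteq P_n\cap M\notin\mathcal{K}$, so $d_n\in C_x(\mathcal{I}^\mathcal{K})$. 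Part (i) now makes $C_x(\mathcal{I}^\mathcal{K})$ closed, so it contains $\overline{D}=F$. For the reverse inclusion I take $y\notin F$; closedness of $F$ furnishes an open $U\ni y$ with $U\cap F=\emptyset$, whence $x_k\notin U$ for all $k$ and, for every $M\in\mathcal{I}^*$, the relevant index set is empty and so lies in $\mathcal{K}$. Thus $y\notin C_x(\mathcal{I}^\mathcal{K})$ and $C_x(\mathcal{I}^\mathcal{K})=F$.

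I expect the main obstacle to be the forward inclusion $F\subseteq C_x(\mathcal{I}^\mathcal{K})$ in (ii). Exhibiting a single witnessing set $M$ is easy, but keeping each $d_n$ a cluster point requires that no admissible modification by $M\in\mathcal{I}^*$ erodes $P_n$ into a member of $\mathcal{K}$; this is precisely where the three hypotheses $P_n\notin\mathcal{I}$, $P_n\notin\mathcal{K}$ and ``$\mathcal{I}\cup\mathcal{K}$ is an ideal'' act together, through the elementary but essential fact that removing an $\mathcal{I}$-set from a set outside $\mathcal{I}\cup\mathcal{K}$ leaves it outside $\mathcal{K}$. The companion delicacy in (i)---that the auxiliary function depends on the candidate point, so the single-neighbourhood argument of the classical proof does not transcribe verbatim---is the other point to treat with care; once both are settled, the separability and density steps are routine.
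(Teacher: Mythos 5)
Your proposal has a genuine gap, and it is the same oversight in both places: you forget what the auxiliary function $g$ of Definition \ref{Lb} does \emph{off} the witnessing set. Since $g(s)$ equals the candidate point for $s\notin M$, and the candidate point lies inside the very neighbourhood $U$ being tested, the index set $\{s: g(s)\in U\}$ always contains $M^{\complement}$. In part (i), your claim that for arbitrary $M'\in\mathcal{I}^*$ ``the index set attached to $z$ is contained in $\{n: x_n\in U\}$'' is therefore false: that set equals $\{n\in M': x_n\in U\}\cup (M')^{\complement}$, and $(M')^{\complement}\in\mathcal{I}$ need not belong to $\mathcal{K}$. Your closing remark that the perturbation is ``only by a set drawn from $\mathcal{I}$'' which ``the ideality condition keeps from producing new cluster behaviour'' does not repair this: a perturbation by a set in $\mathcal{I}\setminus\mathcal{K}$ is exactly what \emph{can} create new cluster points under Definition \ref{Lb}, membership in $\mathcal{I}\cup\mathcal{K}$ is not membership in $\mathcal{K}$, and in any case part (i) carries no ideality hypothesis. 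The same slip sinks the reverse inclusion of your part (ii): when $U\cap F=\emptyset$ the relevant index set is not empty but precisely $M^{\complement}$, and concluding $M^{\complement}\in\mathcal{K}$ requires $\mathcal{I}\subseteq\mathcal{K}$, which is not assumed. (A salvage for (i) does exist, but you would have to say it: if even one point fails to be an $\mathcal{I}^\mathcal{K}$-cluster point, then testing every $M\in\mathcal{I}^*$ forces $M^{\complement}\in\mathcal{K}$, i.e.\ $\mathcal{I}\subseteq\mathcal{K}$, and only then does your downward-closure step go through; otherwise $C_x(\mathcal{I}^\mathcal{K})=X$ and closedness is trivial.)

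The paper's proof is structured to sidestep precisely this difficulty, which is why your ``complement is open'' route fails where its route does not. For (i) the paper shows $\overline{C_x(\mathcal{I}^\mathcal{K})}\subseteq C_x(\mathcal{I}^\mathcal{K})$ by taking $p\in U\cap C_x(\mathcal{I}^\mathcal{K})$ and \emph{reusing $p$'s witness} $M$: since $p$ and $y$ both lie in $U$, the $p$-modified and $y$-modified sequences meet $U$ at exactly the same indices, so the uncontrolled set $M^{\complement}$ never has to be compared with $\mathcal{K}$ at all; your argument, by contrast, has no witness to borrow (there is none for $y$) and must survive arbitrary $M'$, which it cannot. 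For the upper bound in (ii) the paper invokes its observation $C_x(\mathcal{I}^\mathcal{K})\subseteq C_x(\mathcal{K})$ and then shows $C_x(\mathcal{K})\subseteq F$ by a nonemptiness/density argument ($\{n: x_n\in U\}\notin\mathcal{K}$ is nonempty and the range of $x$ lies in the dense set $S\subseteq F$), again never confronting $M^{\complement}$. Your forward inclusion $F\subseteq C_x(\mathcal{I}^\mathcal{K})$ is correct and close to the paper's: your computation that $P_n\cap M\notin\mathcal{K}$ from $P_n\notin\mathcal{I},\mathcal{K}$ and $\mathcal{I}\cup\mathcal{K}$ being an ideal is sound (in fact stronger than needed, since the definition only demands one witness and $M=\omega$ already works), and your appeal to part (i) plus $\overline{D}=F$ parallels the paper's use of Lemma \ref{lem} via $C_x(\mathcal{I}\cup\mathcal{K})$. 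But as it stands, part (i) and the reverse inclusion of part (ii) are not proved.
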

\begin{proof}
	Consider the sequence $x=\{x_n\}$ in $X$ and $\mathcal{I}$, $\mathcal{K}$ be the two ideals on $\omega$.
	\begin{itemize}
		\item[\textup{(i)}] Let $y \in  \overline{C_x(\mathcal{I}^\mathcal{K})}$; the derived set of $C_x(\mathcal{I}^\mathcal{K})$. Let $U$ be an open set containing $y$. It is clear that $U \cap C_x(\mathcal{I}^\mathcal{K}) \neq \phi $. Let $p \in (U \cap C_x(\mathcal{I}^\mathcal{K}))$ i.e., $p \in U$ and $p \in C_x(\mathcal{I}^\mathcal{K})$. Now there exist a set $M \in \mathcal{I}^*$, such that $\{y_n\}_{n \in \omega}$ given by $y_n = x_n$ if $n \in M$ and $p$, otherwise; we have $\{n \in \omega : y_n \in U\} \notin \mathcal{K}$. Consider the sequence   $\{z_n\}_{n \in \omega}$ given by $z_n = x_n$ if $n \in M$ and $y$, otherwise; then $\{n \in \omega : z_n \in U\} = \{n \in \omega : y_n \in U\} \notin \mathcal{K}$. Hence $y \in C_x(\mathcal{I}^\mathcal{K})$.
		\vspace{1mm}
		\item[\textup{(ii)}] Being a closed subset of $X$, $F$ is separable. Let $S=\{s_1, s_2,...\} \subset F$ be a countable set such that $\overline{S}=F$. Consider $x_n = s_i$ for $n \in P_i$. Thus we have the subsequene $\{k_n\}$ of $\{n\}$ for which assume the sequence $x=\{x_{n_k}\}$. Let $y\in C_x(\mathcal{K})$ (taking $y \neq s_i$ otherwise if $y= s_i$ for some $i$, then $y$ is eventually in $F$). We claim $C_x(\mathcal{K}) \subset F$. Let $U$ be any open set containing y. Then $\{n: x_{n_k} \in U\} \notin \mathcal{K}$ and hence non empty i.e., $s_i \in U$ for some $i$. Therefore $F \cap U$ is non empty, So $y$ is a limit point of $F$ and closedness of $F$ gives $y \in F$. Hence $C_x(\mathcal{K}) \subset F$. Further $C_x(\mathcal{I}^\mathcal{K}) \subseteq  C_x(\mathcal{K}) \subset F$.\\
		Conversely, for $a \in F$ and $U$ be an open set containing $a$, then there exists $s_i \in S$ such that $s_i \in U$. Then $\{n: x_{n_k} \in U\} \supset P_i$ ($\notin \mathcal{K}$, $\mathcal{I}$). Thus $\{n: x_{n_k} \in U\} \notin (\mathcal{I}\cup \mathcal{K})$ i.e., $a \in C_x(\mathcal{I} \cup \mathcal{K})$. On the otherhand, by lemma \ref{lem}, $C_f(\mathcal{I} \cup \mathcal{K}) \subseteq C_f(\mathcal{I}^\mathcal{K})$. So we get the reverse implication. 
	\end{itemize}	
\end{proof}
\begin{remark}
	Theorem \ref{11} generalizes Theorem 10 in \cite{LD05}, it follows by letting $\mathcal{I} =\mathcal{K}$ in the above theorem.
\end{remark}

\section*{ Acknowledgement}
	The first author would like to thank the University Grants Comission (UGC) for awarding the junior research fellowship vide UGC-Ref. No.: 1115/(CSIR-UGC NET DEC. 2017), India.

\end{document}